\newtheorem{lemma}{Lemma}[section]
\newtheorem{theorem}[lemma]{Theorem}
\newtheorem{corollary}[lemma]{Corollary}
\newtheorem{example}[lemma]{Example}
\newtheorem{definition}[lemma]{Definition}
\newtheorem{proposition}[lemma]{Proposition}
\newtheorem{remark}[lemma]{Remark}
\newenvironment{proof}{\vspace{.1cm}\noindent{\sc Proof.}\hspace{0.10cm}\,\,}{$\hfill\Box$\vspace{.3cm}}
\newcommand{\Exp}{\mathrm{Exp\,}}
\newcommand{\Ln}{\mathrm{Ln\,}}
\newcommand{\diag}{\mathrm{diag\,}}
\newcommand{\rank}{\mathrm{rank\,}}
\newcommand{\im}{\mathrm{im\,}}
\newcommand{\spa}{\mathrm{span\,}}
\newcommand{\ins}{\mathrm{in}}
\newcommand{\out}{\mathrm{out}}
\newcommand{\ground}{\mathrm{zero}}
\newcommand{\mR}{\mathbb{R}}
\newcommand{\mL}{\mathcal{L}}
\title{A network dynamics approach to chemical reaction networks}
\author{A.J. van der Schaft\thanks{Arjan van der Schaft is with the Johann Bernoulli Institute for Mathematics and Computer
Science, University of Groningen, PO Box 407, 9700 AK, the Netherlands, +31-50-3633731, 
{\tt\small A.J.van.der.Schaft@rug.nl}}, S. Rao\thanks{Shodhan Rao is with Ghent University Global Campus,
119 Songdomunhwa-ro, Yeonsu-gu,
Incheon, South Korea 406-840, {\tt\small shodhan.rao@ghent.ac.kr}}, B. Jayawardhana
\thanks{Bayu Jayawardhana is with ENTEG, Faculty of Mathematics and
Natural Sciences, University of Groningen, the
Netherlands, +31-50-3637156, {\tt\small b.jayawardhana@rug.nl, bayujw@ieee.org}}
}
\date{}
\begin{document}
\maketitle

\begin{abstract} 
A crisp survey is given of chemical reaction networks from the perspective of general nonlinear network dynamics, in particular of consensus dynamics. It is shown how by starting from the complex-balanced assumption the reaction dynamics governed by mass action kinetics can be rewritten into a form which allows for a very simple derivation of a number of key results in chemical reaction network theory, and which directly relates to the thermodynamics of the system. Central in this formulation is the definition of a balanced Laplacian matrix on the graph of chemical complexes together with a resulting fundamental inequality. This directly leads to the characterization of the set of equilibria and their stability. Both the form of the dynamics and the deduced dynamical behavior are very similar to consensus dynamics, and provide additional insights and perspectives to the latter. 
The assumption of complex-balancedness is revisited from the point of view of Kirchhoff's Matrix Tree theorem.
Finally, using the classical idea of extending the graph of chemical complexes by an extra 'zero' complex, a complete steady-state stability analysis of mass action kinetics reaction networks with constant inflows and mass action outflows is given. This provides a unified framework for structure-preserving model reduction, and for the control (see already \cite{sontag}) and 'reverse engineering' of (bio-)chemical reaction networks.
\end{abstract}
\section{Introduction} \label{sec:intro}
Network dynamics has been the subject of intensive research in recent years due to the ubiquity of large-scale networks in various application areas. While many advances have been made in the analysis of linear network dynamics, the study of nonlinear network dynamics still poses many challenges, especially in the presence of in- and outflows.

In this paper, we revisit the analysis of chemical reaction networks as a prime example of nonlinear network dynamics, playing an important role in systems biology, (bio-)chemical engineering, and the emerging field of synthetic biology.  
Apart from being {\it large-scale} (typical reaction networks in living cells involve several hundreds of chemical species and reactions) a characteristic feature of chemical reaction network dynamics is their intrinsic {\it nonlinearity}. In fact, mass action kinetics, the most basic way to model reaction rates, leads to polynomial differential equations. On top of this, chemical reaction networks, in particular in a bio-chemical context, usually have inflows and outflows. 

The foundations of the structural theory of (isothermal) chemical reaction networks (CRNs) were laid in a series of seminal papers by Horn, Jackson, and Feinberg in the 1970s.
The basic starting point of  e.g. \cite{HornJackson, Horn, Feinberg2} is the identification of a graph structure for CRNs by defining the chemical complexes, i.e., the combination of chemical species appearing on the left-hand (substrate) and right-hand (product) sides of every reaction, as the vertices of a graph and the reactions as its edges. This enables the formulation of the dynamics of the reaction network as a dynamical system on the graph of complexes. Furthermore, in these papers the philosophy was put forward of delineating, by means of {\it structural} conditions on the graph, a large class of reaction networks exhibiting the same type of dynamics, irrespective of the precise values of the (often unknown or uncertain) reaction constants. This 'normal' dynamics is characterized by the property that for every initial condition of the concentrations there exists a unique positive equilibrium to which the system will converge. Other dynamics, such as multi-stability or presence of oscillations, can therefore only occur within reaction networks that are violating these conditions. 
The main sufficient structural conditions are known as the Deficiency Zero and Deficiency One theorems, see e.g. \cite{FeinbergHorn1974, Feinberg1}.
For an overview of results on CRNs, and current research in this direction including the global persistence conjecture, we refer to \cite{Angeli2009A} and the references quoted therein. An important step in extending the framework of CRNs towards {\it feedback stabilization} has been made in \cite{sontag}; also setting the stage for further {\it regulation} questions. 

\smallskip
The contribution of the present paper is two-fold. First, the formulation and analysis of mass action kinetics chemical reaction networks is revisited from the point of view of {\it consensus dynamics} and its nonlinear versions \cite{Cortes2008, Cortes2013, phsystemsgraphs}. The consideration of concepts from algebraic graph theory, such as the systematic use of weighted Laplacian matrices, provides a framework for (re-)proving many of the previously obtained results on CRNs in a much simpler and insightful manner. In particular, in our previous work \cite{vds, rao} we have shown how under the assumption of existence of a detailed-balanced equilibrium, or the weaker assumption of existence of a complex-balanced equilibrium (a concept dating back to Horn \& Jackson \cite{HornJackson}), the weights of the graph of complexes can be redefined in such a way that the resulting Laplacian matrix becomes symmetric (detailed-balanced case) or balanced (complex-balanced case). As a result, the characterization of the set of positive equilibria and their stability as originating in \cite{HornJackson, Horn, Feinberg2} follows in a simple way. Moreover, this formulation allows for a direct port-Hamiltonian interpretation \cite{Schaft2013Lyon}, merging CRNs with the geometric thermodynamical theory of Oster \& Perelson \cite{opk, op}, and leads to new developments such as a theory of structure-preserving model reduction of chemical reaction networks, based on Kron reduction of the Laplacian matrix \cite{vds, rao, RAO2014}.

Interestingly, the nonlinearity of the chemical reaction dynamics is closely related to the fact that apart from the graph of complexes, another construction comes into play, namely the {\it representation} of the graph of complexes into the space of concentration vectors. Only if this map is the identity (corresponding to networks with complexes consisting of single species), the chemical reaction dynamics is linear, and, under the assumption of complex-balancedness, reduces to standard linear consensus dynamics.


As indicated above, our approach is based on the assumption of existence of a complex-balanced equilibrium, generalizing the classical notion of a detailed-balanced equilibrium. Based on \cite{vdsJMC} a necessary and sufficient condition is discussed for the existence of a complex-balanced equilibrium based on the Matrix Tree theorem (a theorem going back to the work of Kirchhoff on electrical circuits), which extends the classical Wegscheider conditions for existence of a detailed-balanced equilibrium. We also make a connection with the property of {\it mass conservation}. Furthermore, we discuss how these results can be 'dualized' to consensus dynamics, providing new insights.

The second main contribution of the present paper is the dynamical analysis of chemical reaction networks with {\it inflows} and {\it outflows}. The extension of the stability theory of {\it equilibria} for reaction networks without inflows and outflows (called {\it closed} reaction networks in the sequel) to that of {\it steady states} for reaction networks {\it with} inflows and outflows (called {\it open networks}) is far from easy, due to the intrinsic nonlinearity of the reaction dynamics. Recently, there has been a surge of interest in open CRNs; we mention \cite{Angeli2009, Angeli2011a, Chaves2005, Craciun2010, Flach2010, RaoCDC2013}. 
In the present paper we analyze open reaction networks by revisiting\footnote{Recently also in \cite{Craciun2010} the idea of adding a zero complex was followed up; however in the different context of preclusion of multi-stability in open CRNs.} the classical idea of extending the graph of complexes by a `zero' complex \cite{HornJackson, FeinbergHorn1974}.  We will show how in this way the results based on complex-balancedness for closed CRNs can be fully extended to CRNs with constant inflows and mass action kinetics outflows\footnote{This class of open reaction networks is motivated by several examples of biochemical reaction network models (see for example, the biochemical model in \cite{Karen} and the examples mentioned in \cite{FeinbergHorn1974}), and also derives from the assumption that some of the complexes or species involved in the reactions are kept at a constant concentration (see e.g. \cite{Karen, Karen2}).}. In particular, while in \cite{HornJackson, FeinbergHorn1974} the specific properties of the zero complex do not play any role in the analysis, the present paper spells out the equivalence of steady states with equilibria of the extended network, and shows how due to the presence of inflows and outflows the set of steady states may shrink to a unique steady state, while furthermore the presence of steady states on the boundary of the positive orthant can be precluded. Moreover, it allows to extend the model reduction techniques of \cite{vds, rao, RAO2014} to CRNs with constant inflows and mass action kinetics outflows. 
The obtained stability analysis of steady states of open CRNs with constant inflows and mass action kinetics outflows is one of the, up to now rare, cases of a rather complete steady state analysis of nonlinear network dynamics with external inputs.
From a control perspective the steady state analysis of open CRNs opens the possibility of applying the internal model principle (see e.g. \cite{DEPERSIS2012}) to achieve output regulation for such systems with constant reference signals using proportional-integral controllers, for example, in the control of CSTR or gene-regulatory networks as in \cite{UHLENDORF2012}.   

\smallskip
\noindent

The structure of the paper is as follows. 
In Section 2, based on algebraic graph-theoretical tools explored in \cite{vds, rao}, we give a crisp overview of the theory of closed reaction network dynamics satisfying the complex-balanced assumption, based on rewriting the dynamics in terms of a balanced Laplacian matrix directly linked to its port-Hamiltonian formulation.  
We provide a new perspective on the characterization of complex-balancedness by the use of Kirchhoff's Matrix Tree theorem, and establish a connection to mass conservation. Furthermore, we indicate how ideas from reaction network dynamics may be applied to the context of consensus dynamics.
Section 3 deals with the detailed modeling of reaction networks having constant inflows and mass action kinetics outflows by extending the graph with an extra zero complex. Section 4 shows how the assumption of complex-balancedness can be extended to this case, and how this allows to derive precise results on the structure and stability of steady states. Section 5 provides a brief introduction to structure-preserving model reduction of open chemical reaction networks, based on Kron reduction of the graph of complexes. Conclusions follow in Section 6, while the Appendix describes how the situation of detailed-balanced chemical reaction networks can be understood as a special case of the complex-balanced case.
%
%

\smallskip
\noindent\emph{\bf Notation}:  
The space of $n$-dimensional real vectors consisting of all strictly positive entries is denoted by $\mR_+^{n}$ and the space of $n$-dimensional real vectors consisting of all nonnegative entries by $\bar{\mR}_+^{n}$. 
%
The mapping
$\Ln : \mathbb{R}_+^n \to \mathbb{R}^n, \quad x \mapsto \Ln (x),$
is defined as the mapping whose $i$-th component is given as
$\left(\Ln (x)\right)_i := \ln(x_i).$
Similarly, $\Exp : \mathbb{R}^n \to \mathbb{R}_+^n$ is the mapping whose $i$-th component is given as
$\left(\Exp (x)\right)_i := \exp (x_i)$. Furthermore, for two vectors $x,y \in \mathbb{R}_+^n$ we let $\frac{x}{y}$ denote the vector in $\mathbb{R}_+^n$ with $i$-th component $\frac{x_i}{y_i}$.
Finally $\mathds{1}_n$ denotes the $n$-dimensional vector with all entries equal to $1$, and $0_{n}$ the $n$-dimensional vector with all entries equal to zero, while $I_{n}$ is the $n \times n$ identity matrix. 

\smallskip
\noindent\emph{\bf Some graph-theoretic notions} (see e.g. \cite{Bollobas}): A directed graph\footnote{Sometimes called a {\it multigraph} since we allow for multiple edges between vertices.} $\mathcal{G}$ with $c$ vertices and $r$ edges is characterized by a $c \times r$ {\it incidence matrix}, denoted by $D$. Each column of $D$ corresponds to an edge of the graph, and contains exactly one element $1$ at the position of the head vertex of this edge and exactly one $-1$ at the position of its tail vertex; all other elements are zero. Clearly, $\mathds{1}^TD=0$. The graph is {\it connected} if any vertex can be reached from any other vertex by following a sequence of edges; direction not taken into account. It holds that $\rank D= c- \ell$, where $\ell$ is the number of {\it connected components} of the graph. In particular, $\mathcal{G}$ is connected if and only if $\ker D^T = \spa \mathds{1}$. The graph is {\it strongly connected} if any vertex can be reached from any other vertex, following a sequence of {\it directed} edges. A subgraph of $\mathcal{G}$ is a directed graph whose vertex and edge set are subsets of the vertex and edge set of $\mathcal{G}$. A graph is {\it acyclic} (does not contain cycles) if $\ker D =0$.
A {\it spanning tree} of a directed graph $\mathcal{G}$ is a connected, acyclic subgraph of $\mathcal{G}$ that spans all vertices of $\mathcal{G}$.

\section{Closed chemical reaction networks as dynamics on graphs}
%
\subsection{The complex graph formulation}
Consider a chemical reaction network with $m$ chemical species (metabolites) with concentrations $x \in \mathbb{R}^m_+$, among which $r$ chemical reactions take place. 
%
The graph-theoretic formulation, starting with the work of Horn, Jackson and Feinberg in the 1970s, is to associate to each complex (substrate as well as product) of the reaction network a vertex of a graph, while each reaction from substrate to product complex corresponds to a directed edge (with tail vertex the substrate and head vertex the product complex). 

Let $c$ be the total number of complexes involved in the reaction network, then the resulting directed graph $\mathcal{G}$ with $c$ vertices and $r$ edges is called the {\it graph of complexes}\footnote{In the literature sometimes also referred to as {\it reaction graphs}.}, and is defined by its $c \times r$ {\it incidence matrix} $D$.
Since each of the $c$ complexes is a combination of the $m$ chemical species we define the $m \times c$ matrix $Z$ with non-negative integer elements expressing the composition of the complexes in terms of the chemical species. The $k$-th column of $Z$ denotes the composition of the $k$-th complex, and the matrix $Z$ is called the {\it complex composition matrix}\footnote{In \cite{vds,rao} the matrix $Z$ was called the 'complex stoichiometric matrix'.}. It can be immediately verified that $ZD$ equals the standard stoichiometric matrix $S$. 
The mapping $Z: \mathbb{R}^c \to \mathbb{R}^m$ defines a {\it representation} \cite{godsil} of the graph of complexes $\mathcal{G}$ into the space $\mathbb{R}^m$ of chemical species (the $\alpha$-th vertex is mapped to the $\alpha$-th column of $Z$ in $\mathbb{R}^m$). Compared with other network dynamics the presence of the matrix $Z$ constitutes a major, and non-trivial, difference; especially in case $Z$ is not injective. The special case of $Z$ being the identity matrix corresponds to {\it single-species} substrate and product reaction networks ({\it SS reaction networks}). 

The dynamics of the reaction network  takes the form
\begin{equation}
\dot{x} = Sv(x)=ZDv(x) 
\end{equation}
where $v(x)$ is the vector of reaction rates. The most basic way to define $v(x)$ is {\it mass action kinetics}. For example, for the reaction $X_1 + 2X_2 \to X_3$ the mass action kinetics reaction rate is given as $v(x) = kx_1x_2^2$ with $k >0$ a reaction constant. In general, for a single reaction with substrate complex $\mathcal{S}$ specified by its corresponding column $Z_{\mathcal{S}} = \begin{bmatrix} 
Z_{\mathcal{S}1} & \cdots Z_{\mathcal{S}m} \end{bmatrix}^T$ of the complex composition matrix $Z$, the mass action kinetics reaction rate is given by
\[
kx_1^{Z_{\mathcal{S}1}}x_2^{Z_{\mathcal{S}2}} \cdots x_m^{Z_{\mathcal{S}m}},
\]
which can be rewritten as $k \exp (Z_{\mathcal{S}}^T \Ln x)$. 
Hence the reaction rates of the total reaction network are given by 
\[
v_j(x) = k_j \exp (Z_{\mathcal{S}_{j}}^T \Ln x), \quad j=1, \cdots,r,
\]
where $\mathcal{S}_{j}$ is the substrate complex of the $j$-th reaction with reaction constant $k_j >0$. 
This yields the following compact description of the total mass action kinetics rate vector $v(x)$. Define the $r \times c$ matrix $K$ as the matrix whose $(j,\sigma)$-th element equals $k_j,$
if the $\sigma$-th complex is the substrate complex for the $j$-th reaction, and zero otherwise. We will call $K$ the {\it outgoing co-incidence matrix} (since the $\sigma$-th column of $K$ specifies the weighted outgoing edges from vertex $\sigma$). Then 
\begin{equation}
v(x) = K \Exp (Z^T \Ln x),
\end{equation}
and the dynamics of the mass action kinetics reaction takes the form
\begin{equation}\label{closed}
\dot{x} = ZDK\Exp (Z^T \Ln x)
\end{equation}
The same expression (in less explicit form) was already obtained in \cite{sontag}.

It can be verified that the $c \times c$ matrix $L:= - DK$ has nonnegative diagonal elements and nonpositive off-diagonal elements. Moreover, since $\mathds{1}_m^TD=0$ also $\mathds{1}_m^TL=0$, i.e., the column sums of $L$ are all zero. Hence $L$ defines (a transposed version of) a weighted {\it Laplacian matrix}\footnote{In \cite{chapman} such a matrix $L$ was called an {\it out-degree} Laplacian matrix.}. From now on we will simply call $L= -DK$ the Laplacian matrix of the graph of complexes $\mathcal{G}$.
%

\subsection{Analysis of complex-balanced reaction network dynamics}
A chemical reaction network (\ref{closed}) is called {\it complex-balanced}  \cite{Horn} if there exists an equilibrium $x^* \in \mathbb{R}_+^m$, called a {\it complex-balanced equilibrium}, satisfying\footnote{In the special case $\im D \cap \ker Z = \{0\}$ ({\it deficiency zero} in the terminology of \cite{Feinberg2}) complex-balancedness is equivalent to the existence of a positive equilibrium of (\ref{closed}).}
\begin{equation}\label{bal}
Dv(x^*) = - L\Exp (Z^T \Ln (x^*)) =0
\end{equation}
Chemically (\ref{bal}) means that at the complex-balanced equilibrium $x^*$ not only the chemical species but also the complexes remain constant; i.e., for each complex the total inflow (from the other complexes) equals the total outflow (to the other complexes). 
Defining now the diagonal matrix
\begin{equation}\label{K}
\Xi(x^*) :=  \diag \big(\exp (Z_i^T \Ln (x^*))\big)_{i=1, \cdots, c},
\end{equation}
the dynamics (\ref{closed}) can be rewritten into the form
\begin{equation}\label{masterequation}
\dot{x} = - Z\mathcal{L}(x^*) \Exp (Z^T \Ln (\frac{x}{x^*})),\quad 
\mathcal{L}(x^*) := L \Xi(x^*),
\end{equation}
where, since $\Exp (Z^T \Ln (\frac{x^*}{x^*})) =  \mathds{1}_c$, the transformed\footnote{As shown in \cite{rao} the matrix $\mathcal{L}(x^*)$ is in fact  {\it independent} of the choice of the complex-balanced equilibrium $x^*$ up to a multiplicative factor for every connected component of $\mathcal{G}$.} Laplacian matrix $\mathcal{L}(x^*)$ satisfies
\begin{equation}
\mathcal{L}(x^*) \mathds{1}_c =0, \quad \mathds{1}_c^T\mathcal{L}(x^*) =0
\end{equation}
Hence $\mathcal{L}(x^*)$ is a {\it balanced} Laplacian matrix (column {\it and} row sums are zero). 
\begin{remark}
Under the stronger {\it detailed-balanced} assumption \cite{opk, op, vds} the Laplacian matrix $\mathcal{L}(x^*)$ is not only balanced, but in fact is {\it symmetric}. In the Appendix it is discussed how the detailed-balanced situation can be understood as a special case of the complex-balanced one.
\end{remark}
\begin{remark}\label{rem:WR}
Note that the vector $\Exp(Z^T \Ln(x^*))$ corresponding to a complex-balanced equilibrium $x^* \in \mathbb{R}^m_+$ defines a vector in $\mathbb{R}^c_+$ that is in the kernel of the Laplacian matrix $L$. It thus follows \cite[Lemma 3.2.9]{Gaterman} that the connected components of the graph $\mathcal{G}$ of a complex-balanced reaction network are {\it strongly connected}\footnote{Strong connectedness of the connected components is in CRN literature often referred to as {\it weak reversibility} \cite{Horn}}. 
This follows as well from the fact that a graph with balanced Laplacian matrix is strongly connected if and only if it is connected \cite{godsil}. The property also follows from Kirchhoff's Matrix Tree theorem to be discussed later on.
\end{remark}

%

As shown in \cite{rao} (generalizing the detailed-balanced scenario of \cite{vds}) a number of key properties of the reaction network dynamics can be derived in an insightful and easy way from the following fundamental fact. It is well-known \cite{Cortes2008} that balancedness of $\mathcal{L}(x^*)$ is equivalent to $\mathcal{L}(x^*) + \mathcal{L}^T(x^*)$ being positive semi-definite, i.e., $\alpha^T \mathcal{L}\alpha \geq 0$ for all $\alpha \in \mathbb{R}^c$. Based on {\it convexity} of the exponential function we can establish the following stronger property \cite{rao}. 
\begin{proposition}\label{fundamental}
$\gamma^T \mathcal{L}(x^*) \Exp (\gamma) \geq 0$ for any $\gamma \in \mathbb{R}^r$, with equality if and only if $D^T \gamma =0$.
\end{proposition}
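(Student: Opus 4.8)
The plan is to reduce the claim to a one-variable convexity estimate for the exponential, applied edge by edge on the graph of complexes $\mathcal{G}$, and then to use balancedness of $\mathcal{L}(x^*)$ to cancel the leftover cross terms. First I would make the edge structure of $\mathcal{L}(x^*)$ explicit. Writing $\sigma(\ell)$ and $\rho(\ell)$ for the substrate (tail) and product (head) vertices of the $\ell$-th reaction, and setting $a_\ell := k_\ell \exp\!\big(Z_{\sigma(\ell)}^T \Ln(x^*)\big)>0$, the definitions $\mathcal{L}(x^*)=L\Xi(x^*)=-DK\Xi(x^*)$ give $\mathcal{L}(x^*) = -D\,\diag(a_1,\dots,a_r)\,E$, where $E$ is the $r\times c$ matrix with $E_{\ell\sigma}=1$ iff $\sigma=\sigma(\ell)$ and $0$ otherwise (so that $K=\diag(k_1,\dots,k_r)E$). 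Since the $\ell$-th column of $D$ equals $e_{\rho(\ell)}-e_{\sigma(\ell)}$, writing $u:=\Exp(\gamma)\in\mathbb{R}^c_+$ yields the scalar identity
\[
\gamma^T \mathcal{L}(x^*)\Exp(\gamma) \;=\; -\,(D^T\gamma)^T\diag(a)\,(Eu) \;=\; \sum_{\ell=1}^{r} a_\ell\,\big(\gamma_{\sigma(\ell)}-\gamma_{\rho(\ell)}\big)\,\exp(\gamma_{\sigma(\ell)}).
\]

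The core estimate is the tangent-line inequality for the convex function $\exp$: for all $s,t\in\mathbb{R}$ one has $(s-t)\exp(s)\ge \exp(s)-\exp(t)$, with equality if and only if $s=t$. Applying it to each summand with $s=\gamma_{\sigma(\ell)}$, $t=\gamma_{\rho(\ell)}$ and using $a_\ell>0$ gives
\[
\gamma^T \mathcal{L}(x^*)\Exp(\gamma) \;\ge\; \sum_{\ell=1}^{r} a_\ell\,\big(\exp(\gamma_{\sigma(\ell)})-\exp(\gamma_{\rho(\ell)})\big) \;=\; -\,(Da)^T \Exp(\gamma).
\]
Balancedness now enters through a one-line computation: since $E\mathds{1}_c=\mathds{1}_r$, we have $\mathcal{L}(x^*)\mathds{1}_c = -D\,\diag(a)\,E\,\mathds{1}_c = -Da$, so the balanced-Laplacian property $\mathcal{L}(x^*)\mathds{1}_c=0$ forces $Da=0$; hence the right-hand side above vanishes and $\gamma^T\mathcal{L}(x^*)\Exp(\gamma)\ge 0$.

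For the equality characterization, equality in the displayed bound forces equality in the tangent-line estimate for every edge with $a_\ell>0$, i.e.\ for every edge (all weights are strictly positive, as $k_\ell>0$ and the exponential is positive); this means $\gamma_{\sigma(\ell)}=\gamma_{\rho(\ell)}$ for all $\ell$, which — the $\ell$-th column of $D$ being $e_{\rho(\ell)}-e_{\sigma(\ell)}$ — is exactly $D^T\gamma=0$. Conversely, if $D^T\gamma=0$ then each term of the first scalar identity vanishes termwise. The only obstacle I anticipate is purely bookkeeping: aligning the tail/head (substrate/product) conventions and the transposes so that $\mathcal{L}(x^*)=-D\,\diag(a)\,E$ holds with the correct positive weights $a_\ell$, and recording explicitly that every $a_\ell>0$, which is what upgrades the equality statement to an ``if and only if''. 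Everything else is the single convexity inequality together with the $Da=0$ cancellation. (Alternatively, since $Da=0$ makes $a$ a circulation on $\mathcal{G}$, one could decompose it into directed cycles and check the inequality on each cycle by telescoping $\sum(\gamma_{i}-\gamma_{i+1})\exp(\gamma_i)\ge\sum(\exp(\gamma_i)-\exp(\gamma_{i+1}))=0$; this is equivalent but longer.)
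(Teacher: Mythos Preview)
Your argument is correct and follows exactly the approach the paper indicates (convexity of the exponential function, with the detailed proof deferred to \cite{rao}): you expand $\gamma^T\mathcal{L}(x^*)\Exp(\gamma)$ edgewise, apply the tangent-line inequality $(s-t)e^s\ge e^s-e^t$ to each edge, and use the balancedness $\mathcal{L}(x^*)\mathds{1}_c=0$ (equivalently $Da=0$) to kill the residual sum. The equality characterization via strict convexity and strict positivity of the weights $a_\ell$ is handled cleanly; the only cosmetic point is that $\gamma$ lives in $\mathbb{R}^c$, not $\mathbb{R}^r$ (a typo inherited from the paper's statement).
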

This result leads to a direct proof of a number of key properties of (\ref{masterequation}), which are known within CRN theory but proven by tedious derivations. The first property which directly follows from Proposition \ref{fundamental}) is that all positive equilibria are in fact complex-balanced equilibria, and that given one complex-balanced equilibrium $x^*$ the set of {\it all} positive equilibria is given by
\begin{equation}\label{equilibria}
\mathcal{E} := \{ x^{**} \in \mathbb{R}^m_+ \mid S^T \Ln \left(x^{**} \right) = S^T \Ln \left(x^{*}\right) \}
\end{equation}
In particular, the set of positive equilibria $\mathcal{E}$ is a smooth manifold of dimension $m - \rank S$. Another property of $\mathcal{E}$ can be seen to be implied by the extra assumption of {\it mass conservation}, which is defined as follows.
\begin{definition}\label{defmassconservation}
A reaction network with complex composition matrix $Z$ and incidence matrix $D$ is said to satisfy {\it mass conservation} if there exists $\mu \in \mathbb{R}^m_+$ such that 
\begin{equation}\label{massconservation}
Z^T\mu \in \ker D^T,
\end{equation}
or, equivalently, $S^T \mu = D^TZ^T \mu =0$.
\end{definition}
\begin{remark}
In case $\mathcal{G}$ be connected $\ker D^T = \spa \mathds{1}$, and the definition of mass conservation reduces to the existence of $\mu \in \mathbb{R}^m_+$ such that $Z^T\mu=\mathds{1}$.
The vector $\mu$ specifies a vector of mass assignments ($\mu_i$ specifies the mass associated to the $i$-the chemical species), and the condition $Z^T\mu = \mathds{1}$ means that all complexes have identical mass. For the general case this holds on any connected component of $\mathcal{G}$.
\end{remark}
\begin{proposition}
Consider a chemical reaction network as before. Then the origin $0$ is on the boundary of $\mathcal{E}$ if and only if the chemical reaction network satisfies mass conservation. 
\end{proposition}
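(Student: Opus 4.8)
The plan is to reparametrize the equilibrium set $\mathcal{E}$ through the linear subspace $\ker S^T$ and then read off the geometric condition ``$0$ lies on the boundary of $\mathcal{E}$'' directly. From the defining relation $S^T\Ln(x^{**}) = S^T\Ln(x^*)$, a point $x^{**}\in\mathbb{R}^m_+$ lies in $\mathcal{E}$ if and only if $\Ln\!\left(\frac{x^{**}}{x^*}\right)\in\ker S^T$; hence $\mathcal{E}$ is the image of $\ker S^T$ under the diffeomorphism $v\mapsto \left(x^*_i\exp(v_i)\right)_{i=1,\dots,m}$ of $\mathbb{R}^m$ onto $\mathbb{R}^m_+$. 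On the other side, using $S=ZD$ (so that $S^T=D^TZ^T$), mass conservation says exactly that $\ker S^T$ contains a componentwise strictly positive vector $\mu$, equivalently --- after replacing $\mu$ by $-\mu$ --- a componentwise strictly negative vector. Finally, since $\mathcal{E}\subseteq\mathbb{R}^m_+$ and $0\notin\mathbb{R}^m_+$, the origin is never an interior point of $\mathcal{E}$, so ``$0\in\partial\mathcal{E}$'' is equivalent to ``$0\in\overline{\mathcal{E}}$'', i.e. to the existence of a sequence $x^{(n)}\in\mathcal{E}$ with $x^{(n)}\to 0$; in the parametrization this means a sequence $v^{(n)}\in\ker S^T$ with $v^{(n)}_i\to-\infty$ for every $i=1,\dots,m$.

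With these reductions both implications become short. For ``mass conservation $\Rightarrow$ $0\in\partial\mathcal{E}$'' I would take a strictly positive $\mu\in\ker S^T$ and exhibit the explicit sequence $x^{(n)}$ with $x^{(n)}_i:=x^*_i\exp(-n\mu_i)$: it lies in $\mathcal{E}$ because $S^T\Ln(x^{(n)})=S^T\Ln(x^*)-n\,S^T\mu=S^T\Ln(x^*)$, and $x^{(n)}\to 0$ because every $\mu_i>0$. For the converse, given a sequence $v^{(n)}\in\ker S^T$ with all coordinates tending to $-\infty$, the observation is simply that the index set $\{1,\dots,m\}$ is finite, so for $n$ sufficiently large the single vector $v^{(n)}$ already has all its coordinates negative; then $\mu:=-v^{(n)}\in\mathbb{R}^m_+$ satisfies $S^T\mu=0$, i.e. $Z^T\mu\in\ker D^T$, which is mass conservation.

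The one spot where a naive argument can go wrong --- and the step I would flag as the ``obstacle'' --- is this converse direction: it is tempting to normalize the directions $v^{(n)}/\|v^{(n)}\|$ and pass to a convergent subsequence, or to invoke a theorem of the alternative of Gordan/Stiemke type, but either route only produces a nonzero vector $w\le 0$ in $\ker S^T$ rather than a strictly negative one, which is strictly weaker than mass conservation. The point to make is that no limiting argument is needed at all: because the sequence diverges to $-\infty$ in each of the finitely many coordinates, one of its own terms is already componentwise negative. Everything else is the bookkeeping of the parametrization of $\mathcal{E}$ by $\ker S^T$, the identity $S=ZD$ relating the two equivalent forms of the mass-conservation condition, and the elementary topological remark that $0$ meets the boundary of $\mathcal{E}$ precisely when it meets its closure.
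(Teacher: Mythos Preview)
Your proof is correct and follows essentially the same route as the paper's: both pass from $\mathcal{E}$ to the affine set $\{z : S^Tz = S^T\Ln(x^*)\}$ (equivalently, your parametrization by $\ker S^T$), observe that $0\in\partial\mathcal{E}$ amounts to the existence of elements with all coordinates arbitrarily negative, and then note that any single such element already gives a strictly positive $\mu\in\ker S^T$. Your version is simply more explicit---in particular, your remark that no limiting or normalization argument is needed for the converse direction spells out exactly the step the paper's terse ``This, in turn, holds if and only if\dots'' leaves to the reader.
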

\begin{proof}
There exists a $x^{**} \in \mathbb{R}^m_+$ with all entries arbitrarily close to $0$ with $S^T\Ln(x^{**}) = S^T\Ln(x^{*})$ if and only if there exists a vector $z$ with all entries arbitrarily close to $- \infty$ such that $S^Tz = S^T\Ln(x^{*})$. This, in turn, holds if and only if there exists a positive vector $\mu \in \ker S^T = \ker D^TZ^T$, or, equivalently, $Z^T\mu \in \ker D^T$. 
\end{proof}

It directly follows from the structure of the Laplacian matrix $L$, see e.g. \cite{sontag, rao}, that the dynamics (\ref{closed}) leaves the positive orthant $\mathbb{R}_+^m$ invariant. Hence concentrations of chemical species remain positive for all future times. On the other hand, the possibility that the solution trajectories of (\ref{closed}) will approach the boundary of the positive orthant for $t \to \infty$ is not easily excluded. The reaction network is called {\it persistent}\footnote{It is generally believed that most reaction networks are persistent. However, up to now this {\it persistence conjecture} has been only proved in special cases (cf. \cite{Anderson}, \cite{Siegel}, \cite{Angeli2011} and the references quoted in there).} if for every $x_0 \in \mathbb{R}_+^{m}$ the $\omega$-limit set $\omega(x_0)$ of the dynamics (\ref{1}) does not intersect the boundary of $\bar{\mathbb{R}}_+^{m}$. 

Using a result from \cite{Feinberg1}, there exists for any initial condition $x_0 \in \mR^m_+$ a unique $x^{* *}\in \mathcal{E}$ such that $x^{**} - x_0 \in \im S$.  By using Proposition \ref{fundamental} in conjunction with the Lyapunov function
\begin{equation}\label{Gibbs}
G(x) =x^T \mathrm{Ln}\left(\frac{x}{x^{**}}\right) + \left(x^{**} - x \right)^T \mathds{1}_m
\end{equation}
it follows that the vector of concentrations $x(t)$ starting from $x_0$ will converge to $x^{**}$ if the reaction network is persistent. 
The chemical interpretation is that $G$ is (up to a constant) the {\it Gibbs' free energy} \cite{op, opk, vds}), with gradient vector $\frac{\partial G}{\partial x}(x) = \mathrm{Ln}\left(\frac{x}{x^{**}}\right)$ being the vector of {\it chemical potentials}. Hence (\ref{masterequation}) can be rewritten as
\begin{equation}\label{masterequation1}
\dot{x} = - Z\mathcal{L}(x^*) \Exp (Z^T \frac{\partial G}{\partial x}(x))
\end{equation}
and the`driving forces' of the reactions are seen to be determined by the {\it complex thermodynamical affinities}\footnote{See e.g. \cite{opk,vds,Schaft2013Lyon} for further information.} $\gamma (x) := Z^T \frac{\partial G}{\partial x}(x) = Z^T \Ln (\frac{x}{x^{**}})$. Furthermore, by Proposition \ref{fundamental} equilibrium arises whenever the components of $\gamma(x)$ reach `consensus' on every connected component of the graph of complexes $\mathcal{G}$.

\subsection{Port-Hamiltonian formulation}
The formulation (\ref{masterequation1}) admits a direct port-Hamiltonian interpretation (see e.g. \cite{vanderschaftmaschkearchive}, \cite{vanderschaftbook, NOW} for an introduction to port-Hamiltonian systems). Indeed, consider the auxiliary port-Hamiltonian system
\begin{equation}\label{PH}
\begin{array}{rcl}
\dot{x} & = & Zf \\[2mm]
e &= & Z^T \frac{\partial G}{\partial x}(x)
\end{array}
\end{equation}
with inputs $f \in \mR^c$ and outputs $e \in \mR^c$, and Hamiltonian given by the Gibbs' free energy $G$ defined in (\ref{Gibbs}). It follows from Proposition \ref{fundamental} that
\begin{equation}\label{resistive}
f = - \mathcal{L}(x^*) \Exp(e)
\end{equation}
defines a true {\it energy-dissipating} relation, that is, $e^T f \leq 0$ for all $e \in \mR^c$ and $f \in \mR^c$ satisfying (\ref{resistive}). By substituting (\ref{resistive}) into (\ref{PH}) one recovers the chemical reaction dynamics (\ref{masterequation1}). 

It should be noted that the energy-dissipating relation (\ref{resistive}) is intrinsically {\it nonlinear}, and generally can{\it not} be integrated to a relation of the form $f = - \frac{\partial R}{\partial e}(e)$ for some (Rayleigh) function $R: \mR^c \to \mR$, since the Poincar\'e integrability conditions are not satisfied (unless $Z$ is e.g. the identity matrix; see the SS reaction networks discussed later on). 


\subsection{Characterization of complex-balancedness}
Complex-balancedness can be characterized as follows, cf. \cite{vdsJMC} for further details. By the definition of $\Ln : \mathbb{R}_+^m \to \mathbb{R}^m$ the  existence of a complex-balanced equilibrium $x^* \in \mathbb{R}^m_+$, that is, $L \Exp (Z^T\Ln(x^*)) =0$, is equivalent to the existence of a vector $\mu^* \in \mathbb{R}^m$ such that
\begin{equation}
L \Exp (Z^T\mu^*)=0,
\end{equation}
or equivalently, $\Exp(Z^T\mu^*) \in \ker L$. Furthermore, we note that $\Exp(Z^T\mu^*) \in \mathbb{R}^c_+$.

First assume that the graph $\mathcal{G}$ is connected. Then the kernel of $L$ is $1$-dimensional, and a vector $\rho \in \mathbb{R}^c_+$ with $\rho \in \ker L$ can be computed by {\it Kirchhoff's Matrix Tree theorem}\footnote{This theorem goes back to the classical work of Kirchhoff on resistive electrical circuits \cite{Kirchhoff}; see \cite{Bollobas} for a succinct treatment. Nice accounts of the Matrix Tree theorem in the context of chemical reaction networks can be found in \cite{mirzaev, gunawardena}.}, which can be summarized as follows. 
Denote the $(i,j)$-th cofactor of $L$ by $C_{ij}=(-1)^{i+j}M_{i,j}$, where $M_{i,j}$ is the determinant of the $(i,j)$-th minor of $L$, which is the matrix obtained from $L$ by deleting its $i$-th row and $j$-th column. Define the adjoint matrix $\mathrm{adj}(L)$ as the matrix with $(i,j)$-th element given by $C_{ji}$. It is well-known that
$L \cdot \mathrm{adj}(L) = (\det L)I_c$, and since $\det L = 0$ this implies $L \cdot \mathrm{adj}(L)=0$.
Since $\mathds{1}^TL=0$ the sum of the rows of $L$ is zero, and hence by the properties of the determinant it is easily seen that $C_{ij}$ does not depend on $i$; implying that $C_{ij} = \rho_j, \, j=1, \cdots, c$. Hence the rows of $\mathrm{adj}(L)$ are given as the row vectors $\rho_j \mathds{1}^T, \, j=1, \cdots, c$, and by defining $\rho := (\rho_1, \cdots, \rho_c)$, it follows that $L\rho=0$. Furthermore, Kirchhoff's Matrix Tree theorem says (cf. \cite{Bollobas}, Theorem 14 on p.58) that $C_{ij} = \rho_i$ is equal to the sum of the products of weights of all the spanning trees of $\mathcal{G}$ directed towards vertex $i$. In particular, it follows that $\rho_k \geq 0, k=1, \cdots,c$. Moreover, since for every vertex $i$ there exists at least one spanning tree directed towards $i$ if and only if the graph is {\it strongly connected}, $\rho \in \mathbb{R}^c_+$ if and only if the graph is strongly connected. 
\begin{example}\label{excyclic}
Consider the cyclic reaction network\\
\begin{center}
\begin{tabular}{c c c}
& $C_3$ & \\
& {\rotatebox[origin=c]{45}{$\xleftrightharpoons[k_6]{\ k_5 \ }$}} \ {\rotatebox[origin=c]{-45}{$\xleftrightharpoons[k_4]{\ k_3 \ }$}} & \\
& $C_1$ \ \ \ \ $\xrightleftharpoons[k_2]{\ k_1 \ }$  \ \ \ \ $C_2$ &
\end{tabular}
\end{center}
\vspace{0.3cm}

\noindent in the three (unspecified) complexes $C_1, C_2, C_3$. The Laplacian matrix is given as
\[
L = \begin{bmatrix} k_1 + k_6 & - k_2 & - k_5 \\
- k_1 & k_2 + k_3 & -k_4 \\
-k_6 & - k_3 & k_4 + k_5
\end{bmatrix}
\]
By Kirchhoff's Matrix Tree theorem the corresponding vector $\rho$ satisfying $L \rho =0$ is given as
\[
\rho = \begin{bmatrix} k_3k_5 + k_2k_5 + k_2k_4 \\
k_1k_5 + k_1k_4 + k_4k_6 \\
k_1k_3 + k_3k_6 + k_2k_6 
\end{bmatrix},
\]
where each term corresponds to one of the three weighted spanning trees pointed towards the three vertices.
\end{example}
In case the graph $\mathcal{G}$ is not connected the same analysis can be performed on any of its connected components. 
\begin{remark}
The {\it existence} (not the explicit {\it construction}) of $\rho$ already follows from the Perron-Frobenius theorem \cite{Horn}, \cite[Lemma V.2]{sontag}; exploiting the fact that the off-diagonal elements of $-L:=DK$ are all nonnegative\footnote{This implies that there exists a real number $\alpha$ such that $-L + \alpha I_m$ is a matrix with all elements nonnegative. Since the set of eigenvectors of $-L$ and $-L + \alpha I_m$ are the same, and moreover by $\mathds{1}^TL=0$ there cannot exist a positive eigenvector of $-L$ corresponding to a non-zero eigenvalue, the application of Perron-Frobenius to $-L + \alpha I_m$ yields the result; see \cite[Lemma V.2]{sontag} for details.}.
\end{remark}
Returning to the existence of $\mu^* \in \mathbb{R}^m$ satisfying $L \Exp (Z^T\mu^*)=0$ this implies the following.
Let $\mathcal{G}_j, \, j=1, \cdots, \ell,$ be the connected components of the graph of complexes $\mathcal{G}$. For each connected component, define the vectors $\rho^1, \cdots, \rho^{\ell}$  as above by Kirchhoff's Matrix Tree theorem (i.e., as cofactors of $L$ or as sums of products of weights along spanning trees). 
Then define the total vector $\rho$ as the stacked column vector $\rho := \mathrm{col} ( \rho^1, \cdots, \rho^{\ell})$. 
Partition correspondingly the composition matrix $Z$ as $Z= [Z_1 \cdots Z_{\ell}]$.
Then there exists $\mu^* \in \mathbb{R}^m$ satisfying $L \Exp (Z^T\mu^*)=0$ if and only if each connected component is strongly connected and on each connected component
\begin{equation}
\Exp (Z_j^T\mu^*) = \beta_j \rho^j,\quad j=1, \cdots \ell,
\end{equation}
for some positive constants $\beta_j, j=1, \cdots \ell$. This in turn is equivalent to strong connectedness of each connected component of $\mathcal{G}$ and the existence of constants $\beta'_j$ such that
\begin{equation}
Z_j^T\mu^* = \Ln \rho^j + \beta'_j \mathds{1}, \quad j=1, \cdots, \ell
\end{equation}
Furthermore, this is equivalent to strong connectedness of each connected component, and
\begin{equation}\label{final}
\Ln \rho \in \im Z^T + \ker D^T
\end{equation}
Finally, (\ref{final}) is equivalent to
\begin{equation}\label{final1}
D^T\Ln \rho \in \im D^TZ^T = \im S^T
\end{equation}
Summarizing we have obtained
\begin{theorem}\label{Kirchhoff}
The reaction network dynamics $\dot{x} = - ZL\Exp (Z^T \Ln (x))$ on the graph of complexes $\mathcal{G}$ is complex-balanced if and only if each connected component of $\mathcal{G}$ is strongly connected (or, equivalently, $\rho \in \mathbb{R}^c_+$) and (\ref{final1}) is satisfied, where the coefficients of the sub-vectors $\rho^j$ of $\rho$ are obtained by Kirchhoff's Matrix Tree theorem for each $j$-th connected component of $\mathcal{G}$. 
\end{theorem}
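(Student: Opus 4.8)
The plan is to recognize that the assertion is precisely the terminus of the chain of equivalences built up in the paragraphs immediately preceding it, so the proof consists in assembling that chain carefully and in justifying its two genuinely structural links: the description of $\ker L$ via Kirchhoff's Matrix Tree theorem, and the reduction of (\ref{final}) to (\ref{final1}). Throughout I would use that $L = -DK$ is block diagonal with respect to the connected components $\mathcal{G}_1,\dots,\mathcal{G}_\ell$, and that $\ker D^T$ consists exactly of the vectors that are constant on each component.

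For the forward implication I would start from the reformulation already noted in the text: complex-balancedness, i.e. existence of $x^* \in \mathbb{R}^m_+$ with $L\Exp(Z^T\Ln(x^*)) = 0$, is equivalent to the existence of $\mu^* \in \mathbb{R}^m$ with $\Exp(Z^T\mu^*) \in \ker L$, and this vector automatically lies in $\mathbb{R}^c_+$. Since $\ker L$ then contains a strictly positive vector, Remark \ref{rem:WR} forces every connected component of $\mathcal{G}$ to be strongly connected, which by Kirchhoff's Matrix Tree theorem is the same as $\rho \in \mathbb{R}^c_+$. For a strongly connected component the block $L_j$ has one-dimensional kernel spanned by the positive vector $\rho^j$, so $\Exp(Z_j^T\mu^*) = \beta_j\rho^j$ with necessarily $\beta_j > 0$; taking componentwise logarithms gives $Z_j^T\mu^* = \Ln\rho^j + (\ln\beta_j)\mathds{1}$, i.e. $\Ln\rho^j \in \im Z_j^T + \spa\mathds{1}$ for each $j$, and stacking these relations over the components is exactly $\Ln\rho \in \im Z^T + \ker D^T$, which is (\ref{final}). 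Applying $D^T$ and using $D^T\ker D^T = 0$ together with $D^TZ^T = S^T$ yields $D^T\Ln\rho \in \im S^T$, i.e. (\ref{final1}).

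For the converse I would assume each component strongly connected — so $\rho \in \mathbb{R}^c_+$ and $\Ln\rho$ is defined — and that (\ref{final1}) holds. Then $D^T\Ln\rho = S^Tw = D^TZ^Tw$ for some $w \in \mathbb{R}^m$, hence $\Ln\rho - Z^Tw \in \ker D^T$, so on the $j$-th component $\Ln\rho^j = Z_j^Tw + \lambda_j\mathds{1}$ for a scalar $\lambda_j$; exponentiating gives $\rho^j = e^{\lambda_j}\Exp(Z_j^Tw)$, so $\Exp(Z_j^Tw) \in \ker L_j$ because $\rho^j \in \ker L_j$ by Kirchhoff, and reassembling the blocks, $\Exp(Z^Tw) \in \ker L$. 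Thus $x^* := \Exp(w) \in \mathbb{R}^m_+$ satisfies $L\Exp(Z^T\Ln(x^*)) = 0$ and the network is complex-balanced.

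I do not anticipate a real obstacle: unlike the results earlier in this section, the argument uses no convexity — the input of Proposition \ref{fundamental} is not needed — and is purely combinatorial and linear-algebraic, with all the ingredients already in hand. The one point that wants care is the handling of $\ker L$: one should not claim $\ker L_j$ is one-dimensional until strong connectedness of $\mathcal{G}_j$ is known, and in the forward direction that strong connectedness must be extracted from the balanced-Laplacian fact of Remark \ref{rem:WR} rather than from Kirchhoff's theorem itself (which only produces the nonnegative, possibly zero, vector $\rho$). Once this ordering is respected, the verification that $D^T$ transports (\ref{final}) bijectively onto (\ref{final1}) is routine.
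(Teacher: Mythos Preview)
Your proposal is correct and follows essentially the same route as the paper: the theorem is stated there as a summary (``Summarizing we have obtained'') of the preceding chain of equivalences, and you have reproduced that chain, split into the two implications, with the same ingredients --- the reformulation $\Exp(Z^T\mu^*)\in\ker L$, Kirchhoff's Matrix Tree theorem on each connected component, taking logarithms to pass from $\Exp(Z_j^T\mu^*)=\beta_j\rho^j$ to $Z_j^T\mu^*=\Ln\rho^j+\beta_j'\mathds{1}$, and the transport between (\ref{final}) and (\ref{final1}) via $D^T$. Your explicit converse construction $x^*:=\Exp(w)$ is exactly the inverse of the paper's forward steps.

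If anything, your ordering is slightly more careful than the paper's prose: the text asserts that $\ker L$ is one-dimensional as soon as $\mathcal{G}$ is connected, which is not true in general (a connected but not strongly connected component can have $\dim\ker L_j>1$). You are right that in the forward direction one must first obtain strong connectedness --- via Remark~\ref{rem:WR}, or equivalently by noting that a positive vector in $\ker L_j$ forces $\rho^j\in\mathbb{R}_+^{c_j}$ --- before invoking one-dimensionality of $\ker L_j$ and writing $\Exp(Z_j^T\mu^*)=\beta_j\rho^j$. With that caveat respected, your argument and the paper's coincide.
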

\begin{remark} 
The easiest way to compute the elements $\rho_k, k=1, \cdots,c,$ of $\rho$ is by taking the determinant of the matrix obtained from $L$ by deleting its $k$-th row and $k$-th column.
\end{remark}
Clearly, if $\im Z^T = \mathbb{R}^c$, or equivalently $\ker Z=0$, then (\ref{final}) is satisfied for any\footnote{This is not surprising since $\ker Z=0$ implies zero-deficiency.} $\rho$.
\begin{corollary}
The reaction network dynamics $\dot{x} = - ZL\Exp (Z^T \Ln (x))$ is complex-balanced if and only if $\rho \in \mathbb{R}^c_+$ and 
\begin{equation}
\rho_1^{\sigma_1} \cdot \rho_2^{\sigma_2} \cdots \cdot \rho_c^{\sigma_c} =1,
\end{equation}
for all vectors $\sigma =\mathrm{col} (\sigma_1, \sigma_2, \cdots, \sigma_c) \in \ker Z \cap \im D $.
\end{corollary}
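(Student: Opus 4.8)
The plan is to deduce the corollary directly from Theorem \ref{Kirchhoff} by translating its linear condition (\ref{final1}), namely $D^T \Ln \rho \in \im D^TZ^T = \im S^T$, into the stated multiplicative condition on the entries of $\rho$. The positivity hypothesis $\rho \in \mathbb{R}^c_+$ (equivalently, strong connectedness of each connected component) is carried over verbatim from the theorem, and it is precisely what makes $\Ln \rho$ well defined; so the whole argument is a chain of equivalences under that standing assumption.

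First I would rewrite the image condition as an orthogonality condition. Using $\im S^T = \im (ZD)^T = \bigl( \ker (ZD) \bigr)^\perp$, condition (\ref{final1}) reads: $(D^T \Ln \rho)^T w = 0$ for every $w \in \ker (ZD)$, i.e. $(\Ln \rho)^T (Dw) = 0$ for every $w$ with $ZDw = 0$. Next I would establish the elementary set identity $\{ Dw \mid w \in \ker(ZD) \} = \im D \cap \ker Z$: if $w \in \ker(ZD)$ then $\sigma := Dw \in \im D$ and $Z\sigma = ZDw = 0$, so $\sigma \in \im D \cap \ker Z$; conversely any $\sigma \in \im D \cap \ker Z$ is of the form $\sigma = Dw$, and then $ZDw = Z\sigma = 0$, so $w \in \ker(ZD)$. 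Combining the two steps, (\ref{final1}) is equivalent to $(\Ln \rho)^T \sigma = 0$ for all $\sigma \in \ker Z \cap \im D$.

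Finally, since $(\Ln \rho)^T \sigma = \sum_{k=1}^c \sigma_k \ln \rho_k = \ln\bigl( \rho_1^{\sigma_1} \cdots \rho_c^{\sigma_c}\bigr)$ whenever $\rho \in \mathbb{R}^c_+$, the condition $(\Ln \rho)^T \sigma = 0$ is the same as $\rho_1^{\sigma_1} \cdots \rho_c^{\sigma_c} = 1$. Together with $\rho \in \mathbb{R}^c_+$ this is exactly the assertion of the corollary. For practical verification one only needs to impose this identity on a basis of the (typically low-dimensional, and in the deficiency-zero case trivial) subspace $\ker Z \cap \im D$.

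There is no real obstacle beyond careful bookkeeping. The two points that merit a moment's care are the set identity $\{Dw : ZDw=0\} = \im D \cap \ker Z$ — this is where the deficiency subspace $\im D \cap \ker Z$ enters and where the equivalence would fail if one forgot the intersection with $\im D$ — and the passage from the additive statement about $\Ln \rho$ to the multiplicative statement about $\rho$, which genuinely uses $\rho \in \mathbb{R}^c_+$.
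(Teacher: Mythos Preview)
Your proof is correct and follows essentially the same duality idea as the paper. The only cosmetic difference is that the paper starts from the equivalent condition (\ref{final}), $\Ln \rho \in \im Z^T + \ker D^T$, and takes its orthogonal complement directly as $(\im Z^T + \ker D^T)^\perp = \ker Z \cap \im D$, thereby bypassing your intermediate set identity $\{Dw : ZDw=0\} = \im D \cap \ker Z$.
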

\begin{proof}
$\Ln \rho \in \im Z^T + \ker D^T$ if and only if $\sigma^T \Ln \rho=0$ for all $\sigma \in (\im Z^T + \ker D^T)^{\perp} = \ker Z \cap \im D$, or equivalently
\[
0=\sigma_1 \ln \rho_1 + \cdots + \sigma_c \ln \rho_c = \ln \rho_1^{\sigma_1} + \cdots +  \ln \rho_c^{\sigma_c} =
\ln (\rho_1^{\sigma_1} \cdots \rho_c^{\sigma_c})
\]
for all $\sigma \in \ker Z \cap \im D $.
\end{proof}
\begin{remark}
Note that the assumption of mass conservation (Definition \ref{defmassconservation}) may interfere with condition (\ref{final}). Indeed, mass conservation implies $\ker D^T \subset \im Z^T$ (unless $Z=0$), in which case (\ref{final}) reduces to $\Ln \rho \in \im Z^T$. 
\end{remark}

In the Appendix we will indicate how the constructive conditions for the existence of a complex-balanced equilibrium as obtained in Theorem \ref{Kirchhoff} relate to the classical Wegscheider conditions for the existence of a {\it detailed-balanced} equilibrium.

\subsection{SS reaction networks}
Reaction networks with single-species substrate and product complexes ({\it SS reaction networks}) correspond to $c=m$ and $Z=I_m$, in which case the dynamics (\ref{closed}) reduces to the {\it linear} dynamics
\begin{equation}\label{SS}
\dot{x} = DKx
\end{equation}
An SS reaction network is complex-balanced if there exists a positive equilibrium $x^* \in \mathbb{R}_+^m$ such that $DKx^{*}=0$, and hence can be rewritten as 
\begin{equation}\label{closedcbss}
\dot{x} = - \mathcal{L}(x^*) \frac{x}{x^*}, \quad  \mathcal{L}(x^*) := - DK\Xi(x^*), \quad
\Xi(x^*) := \diag (x_1^*, \cdots, x_m^*),
\end{equation}
where $ \mathcal{L}(x^*)$ is a balanced Laplacian matrix. 
%
The set of positive equilibria of a complex-balanced SS reaction network is given as $\mathcal{E} =\{ x^{**} \in \mathbb{R}^m_+ \mid D^T \Ln \left(x^{**} \right) = D^T \Ln \left(x^{*}\right) \}$, and thus, in case the graph $\mathcal{G}$ is connected, as
$ \mathcal{E} = \{ x^{**} \mid x^{**}= p x^*, p>0 \}$. 

In Remark \ref{rem:WR} we already mentioned that the existence of a complex-balanced equilibrium implies that the connected components of the graph are strongly connected. For SS reaction networks also the converse holds, as follows from the above discussion, either based on Kirchhoff's Matrix Tree theorem or on Perron-Frobenius theorem\footnote{Still another way is to make use of the result of \cite{Horn} stating that a mass action chemical reaction network is complex balanced if it is strongly connected and has zero deficiency. Recall that the deficiency is defined as $\rank D-\rank ZD$. Since $Z=I$ any SS network has zero-deficiency.}. 

\subsection{Relation with consensus dynamics}
The dynamics $\dot{x} = -Lx = DKx$ with $\mathds{1}^TL=0$ as occurring in SS reaction networks can be regarded as `dual' to the standard consensus dynamics $\dot{x} = -L_cx,$ where the Laplacian matrix $L_c$ satisfies $L_c \mathds{1}=0$. In a different context this has been explored in \cite{chapman} where $\dot{x} = -Lx$ with $\mathds{1}^TL=0$ was called {\it advection dynamics}. As also noted in \cite{chapman} this duality originates from a duality in the interpretation of the edges of the underlying directed graph $\mathcal{G}$. For $\dot{x} = -Lx$ with $\mathds{1}^TL=0$ an edge from vertex $i$ to $j$ denotes `{\it material flow}' from vertex $i$ to vertex $j$, while for $\dot{x} = -L_cx$ with $L_c\mathds{1}=0$ an edge from vertex $i$ to $j$ denotes `{\it information}' about vertex $i$ available at vertex $j$. Thus in the first case the graph $\mathcal{G}$ denotes a flow network, while in the latter case $\mathcal{G}$ is a communication graph.

It follows that the results described so far for flow networks can be `transposed' to communication graphs and consensus dynamics. First of all, the Laplacian $L_c$ with $L_c\mathds{1}=0$ can be expressed as $L_c = -J^TD^T$ where $D$ is again the incidence matrix of $\mathcal{G}$ while $J$ (dually to the matrix $K$ as before) can be called the {\it incoming co-incidence matrix}: the $i$-th column of $J$ specifies the weighted edges incoming to vertex $i$. Furthermore, the idea of transforming the `out-degree' Laplacian matrix $L = -DK$ to a balanced Laplacian matrix $\mathcal{L}(x^*)$ under the assumption of complex-balancedness of the graph (or equivalently, under the assumption of strong connectedness of its connected components) can be also applied to the consensus dynamics $\dot{x} = - L_cx$ with $L_c \mathds{1}=0$. Indeed, assume that the connected components of the graph $\mathcal{G}$ are strongly connected. Then Kirchhoff's Matrix Tree theorem provides a positive vector $\sigma \in \mathbb{R}^m_+$ such that $\sigma^TL_c=0$. In fact, $\sigma_j$ is given as the sum of the products of the weights along directed spanning trees directed {\it from} vertex $j$. It follows that $\frac{d}{dt}\sum_{j=1}^m \sigma_jx_j =0$, implying the conserved quantity $\sum_{j=1}^m \sigma_jx_j$. 
Defining the diagonal matrix $\Sigma:= \diag (\sigma_1, \cdots, \sigma_m)$ the transformed Laplacian matrix $\mathcal{L}_c := \Sigma L_c$ is balanced, and hence $\mathcal{L}^T_c  + \mathcal{L}_c \geq 0$. Note that this immediately yields an easy stability proof of the set of equilibria $\mathcal{E} = \{ x \in \mathbb{R}^m_+ \mid x = d \mathds{1}, d > 0 \}$ for the consensus dynamics $\dot{x} = -L_cx$. Indeed, the positive  function $V(x):= x^T\Sigma x$ satisfies
\begin{equation}
\frac{d}{dt} V(x) = -x^T (L^T_c \Sigma  + \Sigma L_c)x = -x^T (\mathcal{L}^T_c + \mathcal{L}_c)x \leq 0,
\end{equation}
and thus serves as a Lyapunov function proving asymptotic stability of the set of consensus states $\mathcal{E}$.
Furthermore, for any initial condition $x_0$ the dynamics will converge to the consensus state $d^* \mathds{1}$, where $d^*$ is given as $d^* = \frac{1}{m}\sum_{j=1}^m \sigma_j x_{0j}$, with $\sigma_1, \cdots, \sigma_m$ determined as above by Kirchhoff's Matrix Tree theorem. 
\section{Reaction networks with constant inflows and mass action kinetics outflows}\label{sec:ON}
In many cases of interest, including bio-chemical networks, reaction networks have inflows and outflows of chemical species. A mass action kinetics chemical reaction network with {\it constant inflows} and {\it mass action kinetics outflows} is described by the following extension\footnote{Note that (\ref{1}) formalizes a situation of {\it direct} in- and outflow of some of the chemical complexes in the reaction network. Modeling of in- or outflows of single chemical species which do not already appear as complexes in the graph need to be incorporated in (\ref{1}) by the introduction of extra complexes. For other scenarios of open chemical reaction reaction networks such as continuous-stirred tank reactors with convective in- and outflows we refer to e.g. \cite{Hangos}.} of (\ref{closed})
\begin{equation}\label{1}
\dot{x} = ZDv(x) + ZD_{\ins}v_{\ins} + ZD_{\out}v_{\out}(x), \quad x \in \mathbb{R}_+^m
\end{equation}
Here the matrices $D_{\ins}$ and $D_{\out}$ specify the structure of the inflows and outflows. $D_{\ins}$ is a matrix whose columns consist of exactly one element equal to $+1$ (at the row corresponding to the complex which has inflow) while the other elements are zero. Similarly, $D_{\out}$ is a matrix whose columns consist of exactly one element equal to $-1$ (at the row corresponding to the complex which has outflow) while the rest are zero. 
As in the closed network case, $v(x)$ is the vector of (internal) mass action kinetics reaction rates given by $v(x) = K\Exp (Z^T \Ln (x))$. Furthermore, $v_{\ins} \in \mathbb{R}_+^k$ is a vector of constant positive inflows, while $v_{\out}(x)  \in \mathbb{R}_+^l$ is a vector of mass action kinetics outflows described by mass action kinetics as
\begin{equation}\label{link1}
D_{\out}v_{\out}(x) = -\Delta_{\out} \Exp (Z^T \Ln (x)),
\end{equation}
where $\Delta_{\out}$ is a diagonal matrix with non-negative elements given by the mass action kinetics rate constants.

A classical idea due to \cite{HornJackson} is that by the addition of an extra complex the reaction network (\ref{1}) can be represented as a {\it closed} reaction network on the extended graph\footnote{Similar ideas of adding vertices to the graph are used in network flow theory; see e.g. \cite{Bollobas}.}. 
In fact, an extra complex is added in such a way that the edges from the extra complex to the ordinary complexes model the {\it inflows} into the network, while the edges towards the extra complex model the {\it outflows} of the network. The complex composition matrix $Z_{\ground}$ corresponding to the extra complex is defined to be the $m$-dimensional {\it zero column vector}, and the extra complex is therefore called the {\it zero complex}. Hence the zero complex serves as a combined `source and sink' complex, which does not contribute to the overall mass. As a consequence, the extended network cannot satisfy mass conservation, cf. Definition \ref{massconservation}.

The resulting graph, consisting of the original graph of complexes together with the zero complex, is called the {\it extended graph of complexes} of the open reaction network (\ref{1}), and has complex composition matrix
\[
Z_e = \begin{bmatrix} Z & Z_{\ground}\end{bmatrix}=\begin{bmatrix} Z & 0 \end{bmatrix}
\]
The incidence matrix of the extended complex graph, denoted by $D_e$, is given as
\[
D_e = \begin{bmatrix} B \\ B_{\ground} \end{bmatrix},
\]
with $B_{\ground}$ a row vector corresponding to the zero complex, while in the notation of (\ref{1})
\begin{equation}\label{eq:B}
B = \begin{bmatrix} D & D_{\ins} & D_{\out} \end{bmatrix}
\end{equation}
Now define the $c$-dimensional column vector $L_{\ins}$ as 
\begin{equation}\label{link}
L_{\ins} = - D_{\ins}v_{\ins}
\end{equation}
Furthermore, let $L_{\out}$ be the $c$-dimensional column vector whose $i$-th element is equal to minus the $i$-th diagonal element of $\Delta_{\out}$.
Then extend the $c \times c$ Laplacian matrix $L$ of the graph of (ordinary) complexes to an $(c + 1) \times (c + 1)$ Laplacian matrix $L_e$ of the extended graph of complexes as
\begin{equation}\label{Lapext}
L_e := \begin{bmatrix} 
L + \Delta_{\out} & L_{\ins} \\
L_{\out} & \delta_{\ins}
\end{bmatrix},
\end{equation}
where $\delta_{\ins} \geq 0$ equals minus the sum of the elements  of $L_{\ins}$. By construction $L_e$ has non-negative diagonal elements, non-positive off-diagonal elements, while its columns sums are all zero.

It follows that the dynamics (\ref{1}) of the mass action reaction network with constant inflows and mass action kinetics outflows is equal to the mass action kinetics dynamics of the {\it extended graph of complexes} with extended stoichiometric matrix $S_e=Z_eD_e$.
Indeed, since $Z_{\ground} = 0$
\begin{equation}
\dot{x} = Z[Dv(x) + D_{\ins}v_{\ins} + D_{\out}v_{\out}(x)] = ZBv_e(x) = Z_eD_ev_e(x) =S_ev_e(x),
\end{equation}
where 
\begin{equation}\label{eq:v_e}
v_e(x) = \begin{bmatrix}  v(x) \\ v_{\ins} \\ v_{\out}(x) \end{bmatrix}
\end{equation}
with $v(x) = K\Exp (Z^T \Ln (x))$.
Furthermore, by using (\ref{link}), (\ref{link1}), (\ref{Lapext}),
\begin{equation}\label{2}
\begin{array}{rcl}
\dot{x}  & =  & Z[Dv(x) + D_{\ins}v_{\ins} + D_{\out}v_{\out}(x)] \\[2mm]
& = & - \begin{bmatrix} Z & 0 \end{bmatrix} L_e \begin{bmatrix} \Exp (Z^T \Ln (x)) \\ 1 \end{bmatrix} =  - Z_eL_e \Exp (Z_e^T \Ln (x))
\end{array}
\end{equation}


\begin{example}\label{eg:CB}
As a simple example consider a reaction network with $x \in \mathbb{R}^3_+,$ consisting of one reversible reaction with forward and reverse reaction constants $k_+, k_- >0$, where there is a constant inflow $k_{\ins}$ towards the first complex $X_1$ and mass action kinetics outflow $k_{\out}x_2x_3^2$ out of complex $X_2+2X_3$, that is 
\[
\overset{k_{\ins}}\longrightarrow X_1 \overset{k_+}{\underset{k_{-}}\rightleftharpoons} X_2+2X_3 \overset{k_{\out}}\longrightarrow 
\]
The complex composition matrix $Z$ for this case is given by
\[
Z=\begin{bmatrix} 1 & 0 \\
0 & 1 \\
0 & 2
\end{bmatrix}
\]
The Laplacian matrix of the internal reversible reaction $($split into a forward and reverse reaction$)$ is
\[
L = \begin{bmatrix} k_+ & -k_-  \\ -k_+ & k_-   \end{bmatrix}.
\]
Together with the zero complex this corresponds to the Laplacian of the extended graph of complexes
\[
L_e = \begin{bmatrix} k_+ & -k_- & -k_{\ins} \\ -k_+ & k_-  + k_{\out} & 0 \\ 0 & - k_{\out} & k_{\ins} \end{bmatrix}
\]
and the following dynamics of the reaction network as in (\ref{1})
\[
\dot{x} =  Z\left(\begin{bmatrix} -k_+ & k_-  \\ k_+ & -k_-   \end{bmatrix} \begin{bmatrix} x_1 \\ x_2x_3^2 \end{bmatrix} + \begin{bmatrix} k_{\ins} \\0 \end{bmatrix} - \begin{bmatrix} 0 \\ k_{\out}x_2x_3^2 \end{bmatrix}\right ).
\]
\end{example}

\section{Analysis of reaction networks with constant inflows and mass action kinetics outflows}
%
Based on the formulation of the previous section we can extend the results concerning the stability of closed complex-balanced reaction networks as described before to the case of reaction networks with constant inflows and mass action kinetics outflows. 
As before we note that the representation (\ref{2}) implies that the positive orthant $\mathbb{R}_+^m$ is invariant for (\ref{1}). 
\begin{definition}\label{def:compbal}
An $x^* \in \bar{\mathbb{R}}^m_+$ is called a {\it steady-state} of the reaction network with constant inflows and mass action kinetics outflows given by $(\ref{1})$ if
\begin{equation}
Z[Dv(x^*) + D_{\ins}v_{\ins} + D_{\out}v_{\out}(x^*)] =0
\end{equation}
An $x^* \in \mR^m_+$ is called a {\it complex-balanced} steady-state if 
\begin{equation}\label{complexbalanced}
Dv(x^*) + D_{\ins}v_{\ins} + D_{\out}v_{\out}(x^*) =0
\end{equation}
If there exists a complex-balanced steady-state $x^* \in \mathbb{R}^m_+$ then the open reaction network $(\ref{1})$ is called {\it complex-balanced} .
\end{definition}
Note that, like in the case of closed networks, at a complex balanced steady state the total inflow from every complex is equal to the total outflow from it. 

The definition of a complex-balanced steady state $x^*$ can be succinctly written as $Bv_e(x^*) = 0$, with $B$ given by (\ref{eq:B}) and $v_e$ given by (\ref{eq:v_e}). We have the following simple but crucial observation showing that complex-balanced steady states for (\ref{1}) are actually complex-balanced equilibria of the extended network, and conversely.
\begin{proposition}\label{prop:equivalent}
$x^*$ is a complex-balanced steady-state, i.e., $Bv_e(x^*) = 0$, if and only if $D_ev_e(x^*) = 0$.
\end{proposition}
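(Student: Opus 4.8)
The plan is to exploit the defining structural property of an incidence matrix: every column of $D_e$ has exactly one entry $+1$ and one entry $-1$, so all of its column sums vanish, i.e., $\mathds{1}_{c+1}^T D_e = 0$. Writing $D_e = \begin{bmatrix} B \\ B_{\ground} \end{bmatrix}$ and splitting this identity according to the block structure gives $\mathds{1}_c^T B + B_{\ground} = 0$, hence the row vector associated to the zero complex is completely determined by $B$ via $B_{\ground} = - \mathds{1}_c^T B$. This is really the only idea needed.

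From here both implications are immediate. First I would observe that
\[
D_e v_e(x^*) = \begin{bmatrix} B v_e(x^*) \\ B_{\ground} v_e(x^*) \end{bmatrix} = \begin{bmatrix} B v_e(x^*) \\ - \mathds{1}_c^T B v_e(x^*) \end{bmatrix}.
\]
If $B v_e(x^*) = 0$ (the complex-balanced steady-state condition), then the lower block $-\mathds{1}_c^T B v_e(x^*)$ also vanishes, so $D_e v_e(x^*) = 0$; this is the substantive direction. Conversely, if $D_e v_e(x^*) = 0$, then in particular its upper block $B v_e(x^*)$ equals zero, which is precisely the complex-balanced steady-state condition. That closes the equivalence.

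There is essentially no obstacle here: the statement is a bookkeeping consequence of how the extended graph of complexes was built in Section~\ref{sec:ON}, and the only thing to be careful about is matching the block partition of $D_e$ with that of $\mathds{1}_{c+1}$. If one wants, the chemical reading can be added as a remark: the lower block $B_{\ground} v_e(x^*) = \delta_{\ins} + L_{\out}^T \Exp(Z^T\Ln x^*) + \dots$ expresses balance at the zero complex, and it is automatically implied by balance at all ordinary complexes because the zero complex is a pure source/sink whose net flux is the negative of the aggregate net flux through the rest of the graph.
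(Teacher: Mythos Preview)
Your argument is correct and is essentially the same as the paper's: both rely on $\mathds{1}_{c+1}^T D_e = 0$ to conclude that the last row $B_{\ground}$ of $D_e$ is a linear combination of the rows of $B$, whence $\ker B = \ker D_e$. You have simply made the linear dependence $B_{\ground} = -\mathds{1}_c^T B$ explicit, whereas the paper states it in one sentence.
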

\begin{proof}
Since $\mathds{1}^TD_e=0$ the last row of $D_e$ is dependent on its first $c$ rows, that is, the rows of $B$. Hence $v_e(x^*) \in \ker D_e$ if and only if $v_e(x^*) \in \ker B$.
\end{proof}
\begin{remark} Note that this proposition does not remain true if we would consider instead of a single zero complex e.g. a source and a sink complex.
\end{remark}
If the network with constant inflows and mass action kinetics outflows has complex-balanced steady state $x^*$ then, similarly to (\ref{K}) for closed complex-balanced reaction networks, we define the diagonal matrix
\[
\Xi_e(x^*) :=  \diag \big(\exp (Z_i^T \Ln (x^*))\big)_{i=1, \cdots, c + 1}
\]
\begin{eqnarray*}
&=& \begin{bmatrix} \diag \big(\exp (Z_i^T \Ln (x^*))\big)_{i=1, \cdots, c} & 0 \\ 0 & 1 \end{bmatrix}
=: \begin{bmatrix} \Xi(x^*) & 0 \\ 0 & 1 \end{bmatrix}
\end{eqnarray*}
and rewrite
\begin{equation*}
D_ev_e(x) = - \mathcal{L}_e(x^*) \Exp \begin{bmatrix} Z^T \Ln (\frac{x}{x^*}) \\ 0 \end{bmatrix},
\end{equation*}
where 
\begin{equation}\label{eq:balLap}
\mathcal{L}_e(x^*) := L_e \Xi_e(x^*) = \begin{bmatrix} 
(L + \Delta_{\out}) \Xi(x^*) & L_{\ins} \\
L_{\out} \Xi(x^*) & \delta_{\ins}
\end{bmatrix}
\end{equation}
Note that $\Exp \begin{bmatrix} Z^T \Ln (\frac{x^*}{x^*})) \\ 0 \end{bmatrix} = \mathds{1}_{c+1}$. Hence, 
the existence of a complex-balanced steady state $x^*$ implies by Proposition \ref{prop:equivalent} that
$\mathcal{L}_e(x^*) \mathds{1}_{c+1} =0$. Hence, similarly to the previous section, $\mathcal{L}_e(x^*)$ satisfies $\mathds{1}_{c+1}^T \mathcal{L}_e(x^*) = 0$, as well as $\mathcal{L}_e(x^*) \mathds{1}_{c+1} =0$, and thus defines a {\it balanced weighted Laplacian matrix} for the extended graph of complexes.
The fact that the sum of the elements of the last row of $\mathcal{L}_e(x^*)$ is zero amounts to the equality
$L_{\out}\Xi(x^*) \mathds{1}_c + \delta_{\ins} = 0,$ or equivalently, 
\begin{equation}\label{balance2}
L_{\out} \Exp (Z^T \Ln(x^*)) = \mathds{1}_c^T L_{\ins},
\end{equation}
which can be interpreted as a {\it mass-balance} condition: at steady state the total inflow in the reaction network is equal to the total outflow.

\medskip
We obtain the following refined version of Proposition \ref{fundamental}.
\begin{theorem}\label{th}
Define $\mathcal{L}_e(x^*)$ as above. Then
\begin{equation}\label{dissipation}
\gamma_e^T \mathcal{L}_e(x^*) \Exp (\gamma_e) \geq 0
\end{equation}
for all $\gamma_e$, while equality holds if and only if $D_e^T \gamma_e =0$.
Furthermore, if $\gamma_e$ has last component zero, i.e., is of the form
\begin{equation}\label{gammae}
\gamma_e = \begin{bmatrix} \gamma \\ 0 \end{bmatrix},
\end{equation}
then equality holds if and only if $B^T \gamma =0$, or equivalently
\begin{equation}\label{equivalent}
D^T \gamma = 0, D^T_{\ins} \gamma = 0, D^T_{\out} \gamma = 0
\end{equation}
\end{theorem}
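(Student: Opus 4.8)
The plan is to deduce Theorem \ref{th} directly from Proposition \ref{fundamental} applied to the \emph{extended} graph of complexes. Since the existence of a complex-balanced steady state $x^*$ makes $\mathcal{L}_e(x^*)$ a balanced weighted Laplacian matrix for the extended graph (as established just above in the text), Proposition \ref{fundamental} applies verbatim with $\mathcal{L}$ replaced by $\mathcal{L}_e(x^*)$, $\gamma$ replaced by $\gamma_e \in \mathbb{R}^{r+k+l}$ (the edge space of the extended graph), and $D$ replaced by $D_e$. This immediately gives the inequality \eqref{dissipation} together with the characterization that equality holds if and only if $D_e^T\gamma_e = 0$. So the first half of the statement is essentially a one-line invocation of the earlier result, once one checks that $\mathcal{L}_e(x^*)$ is genuinely balanced --- which is exactly what the displayed computation preceding the theorem does via Proposition \ref{prop:equivalent}.

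For the second half, I would specialize to $\gamma_e = \begin{bmatrix}\gamma \\ 0\end{bmatrix}$ and unpack the condition $D_e^T\gamma_e = 0$. Writing $D_e = \begin{bmatrix} B \\ B_{\ground}\end{bmatrix}$, we have $D_e^T\gamma_e = B^T\gamma + B_{\ground}^T\cdot 0 = B^T\gamma$. Hence equality in \eqref{dissipation} holds if and only if $B^T\gamma = 0$. Then I would substitute $B = \begin{bmatrix} D & D_{\ins} & D_{\out}\end{bmatrix}$ from \eqref{eq:B}, so that $B^T\gamma = \begin{bmatrix} D^T\gamma \\ D_{\ins}^T\gamma \\ D_{\out}^T\gamma\end{bmatrix}$, and the vanishing of this stacked vector is precisely the conjunction \eqref{equivalent}. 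This part is pure bookkeeping with the block structure of the incidence matrices.

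The only point requiring a small argument --- and the one I would flag as the main (minor) obstacle --- is making sure that restricting $\gamma_e$ to have last component zero does not change the equality condition in a subtle way: a priori $D_e^T\gamma_e = 0$ with $\gamma_e = \begin{bmatrix}\gamma\\0\end{bmatrix}$ could be strictly stronger or weaker than ``$D^T\gamma=0$, $D_{\ins}^T\gamma=0$, $D_{\out}^T\gamma=0$''. But as shown above, for such $\gamma_e$ one has the exact identity $D_e^T\gamma_e = B^T\gamma$, with no contribution from the last row of $D_e$, so the equivalence is clean and no extra hypothesis is needed. (One may also remark, for intuition, that this is consistent with Proposition \ref{prop:equivalent}: $\ker D_e$ and $\ker B$ coincide because $\mathds{1}^TD_e = 0$ forces the last row of $D_e$ to be a linear combination of the rows of $B$, and dually on the kernel side the last column of $D_e^T$ plays no independent role when we test against vectors whose last component vanishes.)

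In summary, the proof is: (i) observe $\mathcal{L}_e(x^*)$ is balanced, hence Proposition \ref{fundamental} yields \eqref{dissipation} and the equality condition $D_e^T\gamma_e=0$; (ii) for $\gamma_e$ of the form \eqref{gammae}, compute $D_e^T\gamma_e = B^T\gamma$ using $D_e = \begin{bmatrix}B\\B_{\ground}\end{bmatrix}$; (iii) expand $B^T\gamma$ via \eqref{eq:B} into the three components $D^T\gamma$, $D_{\ins}^T\gamma$, $D_{\out}^T\gamma$ to obtain \eqref{equivalent}. No genuinely hard step is involved; the content is the reduction to the closed-network result, which the earlier sections have set up precisely for this purpose.
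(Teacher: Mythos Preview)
Your approach is correct and essentially identical to the paper's: the paper likewise says that only the last statement requires proof, and dispatches it with the single observation that for $\gamma_e$ of the form \eqref{gammae} one has $D_e^T\gamma_e = 0$ if and only if $B^T\gamma = 0$. One small slip: your parenthetical places $\gamma_e$ in $\mathbb{R}^{r+k+l}$ (the edge space), but $\gamma_e$ lives in the vertex space $\mathbb{R}^{c+1}$ of the extended graph, as your subsequent block computation $D_e^T\gamma_e = B^T\gamma$ in fact correctly uses.
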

\begin{proof}
Only the last statement remains to be proved. This follows by noting that if $\gamma_e$ is given as in (\ref{gammae}) then $D_e^T \gamma_e =0$ if and only if $B^T \gamma =0$.
\end{proof}

We obtain the following basic theorem extending and refining the results of the previous section from closed networks to open reaction networks. 

\begin{theorem}\label{theorem}
Consider a mass action kinetics reaction network with constant inflows and mass action kinetics outflows $(\ref{1})$, for which there exists an $x^* \in \mR^m_+$ satisfying $(\ref{complexbalanced})$. Then

\medskip

$($1$)$: The set of positive steady states is given as
\begin{equation}\label{eq:char}
\{ x^{**} \in \mathbb{R}^m_+ \mid B^TZ^T \Ln (x^{**}) = B^TZ^T \Ln (x^{*}) \}.
\end{equation}
and all positive steady states are complex-balanced.

\medskip

$($2$)$: 
If every component of the graph of complexes is connected to the zero complex (or equivalently, if the extended graph of complexes is {\it connected}) then the set of steady states is given as
\[
\{ x^{**} \in \mathbb{R}^m_+  \mid Z^T \Ln (x^{**}) = Z^T \Ln (x^{*}) \}.
\]
In particular, if additionally $Z$ is surjective then the steady state $x^*$ is {\it unique}.

\medskip

$($3$)$: For every $x_0 \in \mathbb{R}_+^m$, there exists a unique $x_1 \in \mathcal{E}$ with $x_1-x_0 \in \im S$. The steady state $x_1$ is locally asymptotically stable with respect to initial conditions $x_0$ with $x_1 - x_0 \in \im S$. Furthermore, if the network is persistent then $x_1$ is globally asymptotically stable with respect to all these initial conditions.
\end{theorem}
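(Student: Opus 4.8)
The plan is to replay the closed-network analysis of Section~2 on the \emph{extended} graph of complexes, using Proposition~\ref{prop:equivalent} to identify (complex-balanced) steady states of~(\ref{1}) with complex-balanced equilibria of the extended network, and Theorem~\ref{th} in place of Proposition~\ref{fundamental}. For part~(1), I fix a complex-balanced steady state $x^{*}$, take any positive steady state $x^{**}$, and set $\gamma:=Z^T\Ln(x^{**}/x^{*})$, $\gamma_e:=\begin{bmatrix}\gamma\\0\end{bmatrix}=Z_e^T\Ln(x^{**}/x^{*})$. The master equation~(\ref{2}) gives $Z_e\mathcal{L}_e(x^{*})\Exp(\gamma_e)=0$; left-multiplying by $\Ln(x^{**}/x^{*})^T$ and using $\Ln(x^{**}/x^{*})^TZ_e=\gamma_e^T$ yields $\gamma_e^T\mathcal{L}_e(x^{*})\Exp(\gamma_e)=0$, so the equality clause of Theorem~\ref{th} (applicable since $\gamma_e$ has last entry zero) forces $B^T\gamma=0$, i.e.\ $B^TZ^T\Ln(x^{**})=B^TZ^T\Ln(x^{*})$, which is~(\ref{eq:char}). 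Conversely, $B^T\gamma=0$ gives $D_e^T\gamma_e=0$ by the last part of Theorem~\ref{th}; since $x^{*}$ is complex-balanced, $\mathcal{L}_e(x^{*})$ is a balanced Laplacian, so (Remark~\ref{rem:WR}) its connected components are strongly connected and $\Exp(\gamma_e)$, being constant on each component, lies in $\ker\mathcal{L}_e(x^{*})$; hence $D_ev_e(x^{**})=-\mathcal{L}_e(x^{*})\Exp(\gamma_e)=0$ and $x^{**}$ is complex-balanced. For part~(2), connectedness of the extended graph means $\ker D_e^T=\spa\mathds{1}_{c+1}$, so $D_e^T\gamma_e=0$ and the vanishing last entry force $\gamma_e=0$, i.e.\ $Z^T\Ln(x^{**})=Z^T\Ln(x^{*})$; and if $Z$ is surjective then $Z^T$ is injective, so $x^{**}=x^{*}$.

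For part~(3), note first that by~(1) the set of positive steady states is $\mathcal{E}=\{x^{**}\in\mathbb{R}^m_+\mid S_e^T\Ln(x^{**})=S_e^T\Ln(x^{*})\}$ with $S_e=Z_eD_e$ the extended stoichiometric matrix (written $S$ in the statement). This is exactly the algebraic form treated in Section~2, so the strict-convexity (Birch-type) argument underlying the result of~\cite{Feinberg1} applies verbatim and produces, for every $x_0\in\mathbb{R}^m_+$, a unique $x_1\in\mathcal{E}$ with $x_1-x_0\in\im S_e$, lying in the interior. Since $x_1$ is itself a complex-balanced steady state, I re-center everything at $x_1$, writing~(\ref{1}) as $\dot x=-Z_e\mathcal{L}_e(x_1)\Exp\begin{bmatrix}Z^T\Ln(x/x_1)\\0\end{bmatrix}$ and taking $G(x)=x^T\Ln(x/x_1)+(x_1-x)^T\mathds{1}_m$, which is strictly convex with $\nabla G(x)=\Ln(x/x_1)$, attains its global minimum $0$ precisely at $x_1$, and is coercive on $\overline{\mathcal{C}}:=(x_0+\im S_e)\cap\bar{\mathbb{R}}^m_+$. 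With $\eta_e:=\begin{bmatrix}Z^T\Ln(x/x_1)\\0\end{bmatrix}$ and $\nabla G(x)^TZ_e=\eta_e^T$, Theorem~\ref{th} gives $\dot G=-\eta_e^T\mathcal{L}_e(x_1)\Exp(\eta_e)\le0$, with equality iff $B^TZ^T\Ln(x)=B^TZ^T\Ln(x_1)$, i.e.\ iff $x\in\mathcal{E}$. A short computation shows $\mathcal{E}\cap\overline{\mathcal{C}}=\{x_1\}$: if $x'\in\mathcal{E}$ with $x'-x_1\in\im S_e$ then $\Ln(x')-\Ln(x_1)\perp\im S_e\ni x'-x_1$, so $\sum_i(\ln x_i'-\ln x_{1,i})(x_i'-x_{1,i})=0$, and strict monotonicity of $\ln$ forces $x'=x_1$.

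To conclude, I invoke LaSalle's invariance principle on the invariant set $\mathbb{R}^m_+$: the trajectory from $x_0$ stays in $\mathcal{C}=(x_0+\im S_e)\cap\mathbb{R}^m_+$ and in the sublevel set $\{G\le G(x_0)\}$, which is compact in $\overline{\mathcal{C}}$ by coercivity, so whenever this trajectory stays bounded away from $\partial\bar{\mathbb{R}}^m_+$ its $\omega$-limit set is a nonempty compact invariant subset of $\{\dot G=0\}\cap\mathcal{C}=\mathcal{E}\cap\mathcal{C}=\{x_1\}$, forcing $x(t)\to x_1$. Combined with Lyapunov stability from $G$, this gives local asymptotic stability — for $x_0$ close to $x_1$ the relevant sublevel set is automatically interior to $\mathbb{R}^m_+$ — and, under the persistence hypothesis (which makes $\omega(x_0)$ avoid $\partial\bar{\mathbb{R}}^m_+$ for every $x_0$), global asymptotic stability relative to the compatibility class.

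The hard part is the interaction with the boundary of the positive orthant: coercivity of $G$ confines trajectories to a compact slice of the compatibility class, but that slice may abut $\partial\bar{\mathbb{R}}^m_+$, and only for initial data near the interior steady state $x_1$ is the relevant sublevel set automatically interior — which is precisely why local asymptotic stability is unconditional while global asymptotic stability needs persistence to rule out boundary $\omega$-limit points. The only other ingredient that is not pure bookkeeping is the existence, uniqueness and interiority of $x_1$ in each positive compatibility class, a standard strict-convexity argument for the Gibbs function inherited from~\cite{Feinberg1}; once that is in place, everything reduces to the balanced-Laplacian computations already set up for the extended network.
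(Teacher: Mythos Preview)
Your proof is correct and follows essentially the same strategy as the paper: you reduce the open-network analysis to the closed-network theory on the extended graph via Proposition~\ref{prop:equivalent} and Theorem~\ref{th}, use $S_e^T=D_e^TZ_e^T=B^TZ^T$ to identify the steady-state set with~(\ref{equilibria}), and invoke the Gibbs Lyapunov function together with LaSalle for stability. Your write-up is considerably more explicit than the paper's (which for parts~(1) and~(3) essentially says ``follows from the closed-network case''), and your converse argument in part~(1)---that $B^T\gamma=0$ implies $\Exp(\gamma_e)\in\ker\mathcal{L}_e(x^*)$ via constancy on connected components---is a nice detail the paper omits, but the underlying approach is the same.
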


\begin{proof}
({\it 1}): (\ref{eq:char}) follows from the characterization of the set of equilibria of a closed network, cf. (\ref{equilibria}), since the transpose of the stoichiometric matrix for the extended network is given as $S_e^T = D_e^TZ_e^T = B^TZ^T$. That every positive steady-state is complex balanced can be proved similar to the case of the closed networks case.

({\it 2}): Let $x^{**}$ be a positive steady state, and define $\gamma (x^{**}) =  Z^T \Ln \, (\frac{x^{**}}{x^*})$. By the first part of the theorem this means that $B^TZ^T \Ln (x^{**}) = B^TZ^T \Ln (x^{*})$, which is the same as $B^T\gamma (x^{**})  =0$, or equivalently (\ref{equivalent}). In particular, $D_{\ins}\gamma (x^{**})=0$ and $D_{\out}\gamma (x^{**})=0$, and thus the components of $\gamma (x^{**})$ corresponding to the complexes directly linked to the zero complex are zero. 
Furthermore, since $B^T\gamma (x^{**})  =0$, it follows that the components of $\gamma(x^{**})$ corresponding to each of the connected components of the extended graph are equal. Hence if the extended graph of complexes is connected, then $\gamma(x^{**})=0$, which is the same as
$Z^T \Ln (x^{**}) = Z^T \Ln (x^*)$. In particular, if $Z$ is surjective then this implies that the steady state $x^*$ is unique.

({\it 3}): This follows directly from the closed network case.
\end{proof}

It can be concluded from Theorem \ref{theorem} that the presence of inflows and outflows has the tendency to `shrink' the set of positive equilibria for the closed network to a smaller set of positive steady states; in fact, to a singleton if the extended graph is connected and $Z$ is surjective.
Furthermore, as shown in the following proposition, if the extended graph is connected, no steady states can occur at the boundary of the positive orthant $\mathbb{R}^m_+$, implying that the reaction network is automatically {\it persistent}.
\begin{proposition}
Consider a reaction network with constant inflows $v_{\ins} \in \mathbb{R}^k_+$ and mass action outflows $(\ref{1})$, which is complex-balanced. If the extended graph of complexes is connected, then there are no steady states at the boundary of $\mathbb{R}^m_+$.
\end{proposition}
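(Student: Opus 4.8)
The plan is to argue by contradiction via a flux-balance argument at the complexes of the \emph{extended} graph, using the elementary fact that a mass-action rate is a positive constant times the monomial of its \emph{substrate} (tail) complex, together with strong connectedness of the extended graph.

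Suppose $\bar x\in\bar{\mathbb{R}}^m_+$ is a steady state with $S_0:=\{i\mid\bar x_i=0\}$ nonempty. I would first record the vector of complex monomials $\Psi_e(\bar x):=\Exp(Z_e^T\Ln\bar x)\in\bar{\mathbb{R}}^{c+1}_+$, extended continuously to the boundary; its entry at the zero complex equals $1$ because $Z_{\ground}=0$. Call a complex $\sigma$ \emph{inactive} if $\Psi_\sigma(\bar x)=0$ and let $N$ be the set of inactive complexes, so $\ground\notin N$. If $N=\emptyset$ then every ordinary complex is active, which (assuming, as usual, that every species occurs in at least one complex) forces $\bar x\in\mathbb{R}^m_+$, contradicting $S_0\neq\emptyset$; hence I may assume $N\neq\emptyset$.

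The main step is to look at the net flux vector $\phi:=D_ev_e(\bar x)$, whose $\tau$-th entry is (total rate of edges into $\tau$) $-$ (total rate of edges out of $\tau$); by Definition \ref{def:compbal} being a steady state means exactly $Z_e\phi=0$. Two observations then carry the argument. First, every edge with tail in $N$ carries zero flux — an internal or outflow rate is a positive constant times $\Psi_{\mathrm{tail}}(\bar x)$, which vanishes on $N$ — so $\phi_\tau\ge0$ for $\tau\in N$, and, after cancelling the (also vanishing) flux on intra-$N$ edges, $\sum_{\tau\in N}\phi_\tau$ equals the total flux on edges running from the active set into $N$. Second, since the open network is complex-balanced, Proposition \ref{prop:equivalent} together with Remark \ref{rem:WR} shows that the connected extended graph is strongly connected; hence there is a directed path from the zero complex to a chosen vertex of $N$, and its first edge crossing into $N$ is either an inflow edge (rate a strictly positive component of $v_{\ins}$) or an internal edge leaving an active complex (rate $k_j\Psi_{\mathcal{S}_{j}}(\bar x)>0$). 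Therefore $\sum_{\tau\in N}\phi_\tau>0$.

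To close, I would use $Z_e\phi=0$: since the column of $Z_e$ at the zero complex vanishes, $\sum_{\tau=1}^{c}Z_{i\tau}\phi_\tau=0$ for every species $i$. Fixing $i\in S_0$, any active ordinary complex $\tau$ has $Z_{i\tau}=0$ (otherwise $\bar x_i=0$ would kill $\Psi_\tau(\bar x)$), so the sum collapses to $\sum_{\tau\in N}Z_{i\tau}\phi_\tau=0$, a sum of nonnegative terms; hence $Z_{i\tau}\phi_\tau=0$ for all $\tau\in N$. If $\phi_\tau>0$ for some $\tau\in N$, then $Z_{i\tau}=0$ for all $i\in S_0$, i.e. the support of complex $\tau$ misses $S_0$, so $\Psi_\tau(\bar x)>0$ — contradicting $\tau\in N$. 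Thus $\phi_\tau=0$ for all $\tau\in N$, contradicting $\sum_{\tau\in N}\phi_\tau>0$. I expect the delicate points to be the careful bookkeeping of exactly which edges carry zero flux (this is where mass-action kinetics and the role of the substrate complex are essential) and the invocation of strong connectedness of the extended graph to guarantee a strictly positive influx into the inactive set; the remainder is only nonnegativity of $Z_e$ and the incidence structure of $D_e$.
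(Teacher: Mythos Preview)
Your proof is correct and shares the paper's high-level strategy: both exploit strong connectedness of the extended graph (via Remark~\ref{rem:WR} and Proposition~\ref{prop:equivalent}) to produce a directed path from the zero complex into the set of inactive complexes, and derive a contradiction from the strictly positive rate carried by an inflow (or active-tail) edge on that path. The paper back-propagates along such a path, asserting at each step that an inactive complex has zero total inflow ``by complex-balancedness''; you instead bundle the inactive complexes into $N$ and obtain the contradiction $\sum_{\tau\in N}\phi_\tau>0$ versus $\phi_\tau=0$ for all $\tau\in N$.

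The substantive difference is that you actually \emph{justify} the key claim. The paper's phrase ``by complex-balancedness'' is at best elliptic: complex-balancedness of the network means only that some positive $x^*$ satisfies $D_ev_e(x^*)=0$, and Theorem~\ref{theorem}(1) shows merely that \emph{positive} steady states are complex-balanced --- neither statement says anything about a boundary point $x_b$. Your argument via $Z_e\phi=0$, together with the observations that $Z$ has nonnegative entries, that $\phi_\tau\ge0$ on $N$, and that an active complex cannot contain a species in $S_0$, is exactly what is needed to conclude $\phi_\tau=0$ for $\tau\in N$ (equivalently, zero inflow into each inactive complex). So your route is not just a rephrasing: it supplies the step the paper's proof leaves implicit. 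The only minor caveat is your parenthetical assumption that every species appears in some complex, which is standard and harmless but worth stating explicitly.
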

\begin{proof}
Assume by contradiction that there exists a steady state $x_b \in \bar{\mR}_+^m$ with at least one component  (say the $i$-th one) equal to zero. Then consider a complex $\mathcal{C}$ containing this $i$-th species. Because $x_{bi} =0$ the outflows from complex $\mathcal{C}$ are zero, and by complex-balancedness this means that also all inflows to it are zero. From Remark \ref{rem:WR}, it follows that the extended graph of complexes is strongly connected. Hence there exists a {\it directed} path of reactions $\Pi$ starting from the zero complex and ending at $\mathcal{C}$. Now consider the complex which is preceding the complex $\mathcal{C}$ in this path. Then its outflows are zero, and therefore by complex-balancedness also its inflows.
Repeating this argument this shows that along $\Pi$ the inflow from the zero complex is zero, which yields a contradiction.
\end{proof}

\begin{example}
Consider the reaction network in Example \ref{eg:CB} with the Laplacian matrix of the extended graph of complexes given as
\[
L_e = \begin{bmatrix} k_+ & -k_- & -k_{\ins} \\ -k_+ & k_-  + k_{\out} & 0 \\ 0 & - k_{\out} & k_{\ins} \end{bmatrix}
\]
A complex-balanced steady-state $x^*= (x^*_1, x^*_2)$ satisfies the equations
\[
\begin{bmatrix} k_+ & -k_- \\ - k_+ & k_- + k_{\out} \end{bmatrix} \begin{bmatrix} x_1^* \\ x_2^*(x_3^*)^2 \end{bmatrix} = 
\begin{bmatrix} k_{\ins} \\ 0 \end{bmatrix}
\]
or more explicitly
\[
k_{\out}x_2^* (x_3^*)^2 = k_{\ins}, \quad k_+ x_1^* = k_{\ins} + k_-x_2^*(x_3^*)^2
\]
Hence the network is complex-balanced if $k_{\ins} \neq 0$ and $k_{\out} \neq 0$ (and also in the degenerate case $k_{\ins} = k_{\out} = 0$).
The mass-balance condition $(\ref{balance2})$ in this case amounts to
\begin{equation}\label{balance3}
k_{\ins} = k_{\out}x_2^* (x_3^*)^2.
\end{equation}

If $k_{\ins} \neq 0$ and $k_{\out} \neq 0$ then the set of steady states of the network is $1$-dimensional. Note on the other hand that the set of equilibria for the case {\it without} inflows and outflows ($k_{\ins} = k_{\out} = 0$) is $2$-dimensional; in line with the observation that the addition of inflows and outflows has the tendency to shrink the set of steady states as compared to the set of equilibria.
Finally, note that the matrix $\mathcal{K}_e(x^*)$ in this example equals $\diag (x_1^*, x_2^* (x_3^*)^2,1)$, while the resulting matrix $\mathcal{L}_e(x^*)$ is given by 
\[
\mathcal{L}_e(x^*) =
\begin{bmatrix} k_+x_1^* & -k_-x_2^* (x_3^*)^2 & -k_{\ins} \\ -k_+x_1^* & (k_-  + k_{\out})x_2^* (x_3^*)^2 & 0 \\ 0 & - k_{\out}x_2^*(x_3^*)^2 & k_{\ins} \end{bmatrix}
\]
%
%

\end{example}

\section{Structure-preserving model reduction of open reaction networks}
As detailed in the previous section, the dynamics of a complex-balanced chemical reaction network with constant inflows and outflows governed by mass action kinetics ('open reaction network') can be written in terms of the balanced Laplacian matrix given by (\ref{eq:balLap}) as follows
\begin{equation}\label{eq:open}
\dot{x}=-Z_e\mathcal{L}_e(x^*)\Exp(Z_e^T\Ln(\frac{x}{x^*})).
\end{equation}
This specific form allows for application of the model reduction method discussed in \cite{RAO2014,rao}; see also \cite{vds} for the detailed-balanced case. This method is inspired by the Kron reduction method of resistive electrical networks described in \cite{Kron}; see also \cite{vdsSCL}. The speciality of this method is that it is structure-preserving in the sense that the reduced model corresponds to a complex-balanced chemical reaction network governed by mass action kinetics just like the original model. To make the paper self-contained, we briefly describe the method below. For a detailed description of the method, the reader is referred to \cite{RAO2014,rao}.

Let $\mathcal{V}$ denote the set of vertices of the graph of complexes. We perform model reduction by {\it deleting certain complexes in the graph of complexes}, resulting in a reduced graph of complexes. We ensure that the set of complexes that are deleted does not include the zero complex, because otherwise the reduced network corresponding to an open network would become a closed network. Deletion of a complex is equivalent to imposing the complex-balancing condition on it, i.e., the condition that the net inflow into the complex is equal to the net outflow from it. Consider a subset $\mathcal{V}_o \subset \mathcal{V}$ of dimension $c+1-\hat{c}$ that we wish to delete in order to reduce the model. Without loss of generality, assume that the first $\hat{c}$ rows and columns of $\mL_e(x^*)$ and the first $\hat{c}$ columns of $Z_e$ correspond to $\mathcal{V}_r:=\mathcal{V}\backslash \mathcal{V}_o$. Consider the resulting partition of $\mL_e(x^*)$ given by
\begin{equation}\label{eq:part}
\mL_e(x^*)=\begin{bmatrix}
\mL_{11}(x^*) & \mL_{12}(x^*)\\
\mL_{21}(x^*) & \mL_{22}(x^*)
\end{bmatrix}
\end{equation}
where $\mL_{11}(x^*) \in \mR^{\hat{c}\times \hat{c}}$, $\mL_{12}(x^*) \in \mR^{\hat{c} \times (c+1-\hat{c})}$, $\mL_{21}(x^*)\in \mR^{(c+1-\hat{c})\times \hat{c}}$ and $\mL_{22}(x^*)\in \mR^{(c+1-\hat{c})\times(c+1-\hat{c})}$, and the resulting partition of $Z
_e$ given by $Z_e = \begin{bmatrix} Z_1 & Z_2 \end{bmatrix}$. Then write out the dynamics (\ref{eq:open}) as
\[
\dot{x} = - \begin{bmatrix} Z_1 & Z_2 \end{bmatrix}  \begin{bmatrix} \mL_{11}(x^*) & \mL_{12}(x^*) \\ \mL_{21}(x^*) & \mL_{22}(x^*) \end{bmatrix} 
\begin{bmatrix} \Exp \left( Z_1^T \Ln \left(\frac{x}{x^*}\right)\right) \\ \Exp \left(Z_2^T \Ln \left(\frac{x}{x^*}\right)\right) \end{bmatrix}
\]
Let $\hat{\mL}_e(x^*)$ denote the Schur complement of $\mL_e(x^*)$ with respect to the indices corresponding to $\mathcal{V}_o$. Consider now the auxiliary dynamical system
\[
\begin{bmatrix} \dot{y}_1 \\ \dot{y}_2 \end{bmatrix} = - \begin{bmatrix} \mL_{11}(x^*) & \mL_{12}(x^*) \\ \mL_{21}(x^*) & \mL_{22}(x^*) \end{bmatrix} 
\begin{bmatrix} w_1 \\ w_2 \end{bmatrix}
\]
Note that the complex-balancing condition on the complexes in $\mathcal{V}_o$ can be imposed by setting the constraint $\dot{y}_2 =0$. This results in the equation 
\[
w_2 = - \mL_{22}(x^*)^{-1}\mL_{21}(x^*)w_1, 
\]
leading to the reduced auxiliary dynamics defined by the Schur complement
\begin{equation}
\dot{y}_1 = - \big( \mL_{11}(x^*) - \mL_{12}(x^*)\mL_{22}(x^*)^{-1}\mL_{21}(x^*) \big) w_1 
= - \hat{\mL}_e(x^*) w_1
\end{equation}
Substituting $w_1 = \Exp \big(Z_1^T \Ln(x)\big)$ in the above equation and making use of 
$\dot{x} = Z_1 \dot{y}_1 + Z_2 \dot{y}_2 = Z_1 \dot{y}_1$, we then obtain the reduced model given by
\begin{equation}\label{reduced}
\quad \dot{x} = - \hat{Z}_e \hat{\mL_e}(x^*) \Exp \left(\hat{Z}_e^T \Ln \left(\frac{x}{x^*}\right)\right).
\end{equation}
where $\hat{Z}_e:=Z_1$. The following proposition ensures that $\hat{\mL_e}(x^*)$ obeys all the properties of the weighted Laplacian matrix of a complex-balanced reaction network corresponding to a graph of complexes with vertex set $\mathcal{V}_r$. 
\smallskip  

\begin{proposition}\label{prop:WL}
Consider an open complex-balanced network with dynamics given by equation (\ref{eq:open}). With $\mathcal{V}$,  $\mathcal{V}_o$ and $\hat{\mL}_e$ as defined above, the following properties hold:
\begin{enumerate}
\item All diagonal elements of $\hat{\mL}_e(x^*)$ are positive and off-diagonal elements are nonnegative.
\item $\mathds{1}_{\hat{c}}^T\hat{\mL}_e(x^*)=0$ and $\hat{\mL}_e(x^*)\mathds{1}_{\hat{c}}=0$, where $\hat{c}:=c+1-\text{dim}(\mathcal{V}_o)$.
\end{enumerate} 
If $\mathcal{E}$ and $\hat{\mathcal{E}}$ denote the set of steady-states of the original and the reduced networks described by (\ref{eq:open}) and (\ref{reduced}) respectively, then $\mathcal{E} \subseteq \hat{\mathcal{E}}$.
\end{proposition}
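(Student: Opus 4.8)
The plan is to dispose of the three assertions in turn, using throughout the structure established just before Theorem~\ref{th}: $\mathcal{L}_e(x^*)$ is a \emph{balanced} weighted Laplacian, i.e.\ it has nonnegative diagonal entries, nonpositive off-diagonal entries, $\mathcal{L}_e(x^*)\mathds{1}_{c+1}=0$ and $\mathds{1}_{c+1}^{T}\mathcal{L}_e(x^*)=0$. With the partition (\ref{eq:part}) the Schur complement is $\hat{\mathcal{L}}_e(x^*)=\mathcal{L}_{11}(x^*)-\mathcal{L}_{12}(x^*)\mathcal{L}_{22}(x^*)^{-1}\mathcal{L}_{21}(x^*)$, so the first point to settle is invertibility of $\mathcal{L}_{22}(x^*)$. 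Since $\mathcal{L}_e(x^*)$ has nonpositive off-diagonal entries and zero row sums, it is (entrywise) row-diagonally dominant with equality; hence the principal submatrix $\mathcal{L}_{22}(x^*)$, obtained by deleting the rows and columns indexed by $\mathcal{V}_r$, is row-diagonally dominant, its $i$-th row sum being $-\sum_{j\in\mathcal{V}_r}(\mathcal{L}_e(x^*))_{ij}\ge 0$, with strict inequality exactly when the complex $i\in\mathcal{V}_o$ has an edge into $\mathcal{V}_r$. Because the network is complex-balanced, Remark~\ref{rem:WR} shows that every connected component of the extended graph is strongly connected, so — as $\mathcal{V}_o$ excludes the zero complex and, more generally, does not absorb an entire component (equivalently, $\mathcal{L}_{22}(x^*)$ is invertible) — every vertex of $\mathcal{V}_o$ reaches $\mathcal{V}_r$ along a directed path staying in $\mathcal{V}_o$ until it exits; the standard irreducible-diagonal-dominance argument then makes $\mathcal{L}_{22}(x^*)$ a nonsingular $M$-matrix with entrywise nonnegative inverse. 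I expect this Kron-reduction / $M$-matrix step to be the only genuine obstacle, and in the write-up I would simply invoke it from \cite{Kron,vdsSCL} and its reaction-network adaptation in \cite{RAO2014,rao}.

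Granting this, claims (1) and (2) follow. For (2), partition $\mathds{1}_{c+1}=\mathrm{col}(\mathds{1}_{\hat{c}},\mathds{1}_{c+1-\hat{c}})$ conformally with (\ref{eq:part}); the two block rows of $\mathcal{L}_e(x^*)\mathds{1}_{c+1}=0$ give
\[
\mathcal{L}_{11}(x^*)\mathds{1}_{\hat{c}}+\mathcal{L}_{12}(x^*)\mathds{1}_{c+1-\hat{c}}=0,\qquad \mathcal{L}_{21}(x^*)\mathds{1}_{\hat{c}}+\mathcal{L}_{22}(x^*)\mathds{1}_{c+1-\hat{c}}=0 ,
\]
and eliminating $\mathds{1}_{c+1-\hat{c}}=-\mathcal{L}_{22}(x^*)^{-1}\mathcal{L}_{21}(x^*)\mathds{1}_{\hat{c}}$ from the first relation yields $\hat{\mathcal{L}}_e(x^*)\mathds{1}_{\hat{c}}=0$; the transposed computation on $\mathds{1}_{c+1}^{T}\mathcal{L}_e(x^*)=0$ gives $\mathds{1}_{\hat{c}}^{T}\hat{\mathcal{L}}_e(x^*)=0$. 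For (1): $\mathcal{L}_{12}(x^*)$ and $\mathcal{L}_{21}(x^*)$ are off-diagonal blocks, hence entrywise nonpositive, so $\mathcal{L}_{12}(x^*)\mathcal{L}_{22}(x^*)^{-1}\mathcal{L}_{21}(x^*)$ is entrywise nonnegative; subtracting it from $\mathcal{L}_{11}(x^*)$ leaves the off-diagonal entries nonpositive and, together with the zero row/column sums just obtained and the strong connectedness of the components, leaves the diagonal entries positive — precisely the sign pattern of a weighted Laplacian.

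Finally, for $\mathcal{E}\subseteq\hat{\mathcal{E}}$, let $x^{**}\in\mathcal{E}$ be a (positive) steady state of (\ref{eq:open}). By Theorem~\ref{theorem}(1) every positive steady state of the open network is complex-balanced, hence $\mathcal{L}_e(x^*)\,\Exp\!\big(Z_e^{T}\Ln(\tfrac{x^{**}}{x^*})\big)=0$. Writing this vector as $\mathrm{col}(w_1,w_2)$ with $w_1=\Exp(\hat{Z}_e^{T}\Ln(\tfrac{x^{**}}{x^*}))$ and $w_2=\Exp(Z_2^{T}\Ln(\tfrac{x^{**}}{x^*}))$, the block equations $\mathcal{L}_{11}(x^*)w_1+\mathcal{L}_{12}(x^*)w_2=0$ and $\mathcal{L}_{21}(x^*)w_1+\mathcal{L}_{22}(x^*)w_2=0$, upon eliminating $w_2=-\mathcal{L}_{22}(x^*)^{-1}\mathcal{L}_{21}(x^*)w_1$ exactly as in the definition of the Schur complement, give $\hat{\mathcal{L}}_e(x^*)w_1=0$, i.e.\ $\hat{\mathcal{L}}_e(x^*)\Exp(\hat{Z}_e^{T}\Ln(\tfrac{x^{**}}{x^*}))=0$. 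Thus $x^{**}$ is a complex-balanced steady state of the reduced network (\ref{reduced}), hence $x^{**}\in\hat{\mathcal{E}}$; taking $x^{**}=x^{*}$ shows in passing that $x^{*}$ is a complex-balanced steady state of (\ref{reduced}), so the reduced dynamics is indeed of the required form. Once the $M$-matrix facts about $\mathcal{L}_{22}(x^*)$ are in place, the remainder is elementary linear algebra.
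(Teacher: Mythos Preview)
Your approach is correct and is precisely the Kron-reduction/$M$-matrix argument that the paper defers to: the paper's own ``proof'' is simply a pointer to \cite[Propositions~5.1,~5.2]{rao}, and those propositions are proved by exactly the sign-pattern/Schur-complement computation you outline (nonpositive off-diagonal blocks, $\mathcal{L}_{22}(x^*)^{-1}\ge 0$ entrywise, then block elimination on $\mathcal{L}_e(x^*)\mathds{1}=0$ and its transpose). Your explicit derivation of $\mathcal{E}\subseteq\hat{\mathcal{E}}$ via Theorem~\ref{theorem}(1) and block elimination is likewise the argument of \cite[Proposition~5.2]{rao}. One small remark: the statement's ``off-diagonal elements are nonnegative'' is a sign slip --- your proof correctly yields \emph{nonpositive} off-diagonal entries, which is the Laplacian sign pattern intended (and stated correctly in \cite{rao}).
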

\begin{proof}
See proofs of \cite[Propositions 5.1,5.2]{rao}
\end{proof}

From Proposition \ref{prop:WL}, it follows that the reduced network (\ref{reduced}) corresponding to a complex balanced network (\ref{eq:open}) is also complex balanced. Complexes belonging to a certain connected component remain in the same connected component if not deleted. Thus mass conservation is preserved under our model reduction procedure.

Finally, we remark that for the application of our model reduction method, one can also start directly from the form of equations (\ref{2}) given by
$
\dot{x}=Z_eL_e\Exp(Z_e^T\Ln(x))
$, instead of the form (\ref{eq:open}) that uses the balanced Laplacian. Let $\hat{L}_e$ denote the Schur complement of $L_e$ with respect to the indices corresponding to $\mathcal{V}_o$. Consider the reduced model given by
\[
\dot{x}=\hat{Z}_e\hat{L}_e\Exp\big(\hat{Z}_e^T\Ln(x)\big)
\]
and note that it is the same as the reduced model (\ref{reduced}).
\section{Conclusions and outlook} 
We have discussed mass action kinetics chemical reaction networks as a challenging example of large-scale and nonlinear network dynamics, and have pointed out similarities with (nonlinear versions of) consensus dynamics. A fundamental difference resides in the complex composition matrix $Z$, which defines a representation of the graph of complexes (into the space of chemical species). 
Kirchhoff's Matrix Tree theorem has been discussed as an insightful way to compute the kernel of the Laplacian matrix, which, among others, yields an explicit characterization of the existence of a complex-balanced equilibrium. Also the relation to mass conservation has been pointed out.

For a particular class of open reaction networks, namely those with constant inflows and mass action outflows, a detailed stability analysis has been obtained by exploiting the notion of zero complex. By using the graph-theoretical techniques that we have used earlier to analyze closed complex-balanced reaction networks \cite{rao} this leads to a complete steady state stability analysis for this class of open reaction networks. Our results imply the intuitively obvious fact that the presence of inflows and outflows has the tendency to shrink the set of positive equilibria to a smaller set of positive steady states, and leads to the vanishing of possible steady states at the boundary of the positive orthant. This can be related to the feedback stabilization problem studied in \cite{sontag}, as well as internal model control.

An important extension of our results concerns the consideration of other types of kinetics, in particular Michaelis-Menten kinetics; see already \cite{melbourne} for the closed network case. Furthermore, the framework described in this paper can serve as a starting point for the inclusion of {\it regulatory networks} thus leading to direct control and 'reverse engineering' questions.

\section{Appendix: Detailed-balanced reaction networks}
The assumption of existence of a complex-balanced equilibrium can be strengthened to the existence of a {\it detailed-balanced} equilibrium. In this case we start with a directed graph of complexes $\mathcal{H}$ with $c$ complexes and $p$ edges, where each edge corresponds to a {\it reversible} reaction, cf. \cite{vds}. Assuming again mass action kinetics, the reaction rate $v^r_j(x)$ of each $j$-th reversible reaction is given as the difference
\begin{equation*}
v^r_j(x) = k^+_j \exp (Z_{\mathcal{S}_{j}}^T \Ln x) - k^-_j \exp (Z_{\mathcal{P}_{j}}^T \Ln x),
\end{equation*}
where $\mathcal{S}_{j}$ is the substrate and $\mathcal{P}_{j}$ the product complex, and where $k^+_j$ and $k^-_j$ are respectively the {\it forward} and {\it reverse} reaction constants of the reversible reaction. 
Note that $v^r_j(x)$ may take positive and negative values, in contrast with the previously considered case of irreversible reaction rates $v_j(x) \geq 0$. 
A reversible reaction network can be brought into the irreversible form as discussed before by defining the directed graph $\mathcal{G}$ as having the same vertex set as $\mathcal{H}$ but with twice as many edges: every edge $(i,j)$ of $\mathcal{H}$ is split into two edges (of opposite orientation) $(i,j)$ and $(j,i)$ of $\mathcal{G}$. 

A reversible mass action kinetics reaction network with graph of complexes $\mathcal{H}$ is called {\it detailed-balanced} if there exists an $x^* \in \mathbb{R}_+^m$ satisfying $v^r(x^*) = 0$, i.e.
\[
k_{j}^+ \exp \left(Z_{\mathcal{S}_{j}}^T \Ln (x^*)\right) = k_{j}^- \exp \left(Z_{\mathcal{P}_{j}}^T \Ln (x^*)\right), \, j=1, \cdots,r
\]
It is immediate that detailed-balancedness is a special case of complex-balancedness, with the reaction rates in the two opposite edges of $\mathcal{G}$ corresponding to a single edge of $\mathcal{H}$ being equal\footnote{Thermodynamically the assumption of detailed-balancedness is well-justified; it corresponds to microscopic reversibility \cite{opk}.}. (Instead of having the total sum of inflows to be equal to the total sum of outflows for every complex.)

Defining the {\it equilibrium constants} $K^{\mathrm{eq}}_j = \frac{k_{j}^+}{k_{j}^-}$ of each reversible reaction, and the vector $K^{\mathrm{eq}} := (K^{\mathrm{eq}}_1, \cdots, K^{\mathrm{eq}}_r)^T$, it can be shown \cite{FeinbergWeg,Schuster,vds} that detailed-balancedness is equivalent to the Wegscheider conditions
\[
\Ln K^{\mathrm{eq}} \in \im D^T_{\mathcal{H}}Z^T= \im S_{\mathcal{H}}^T,
\]
with $D_{\mathcal{H}}$ the incidence matrix of the graph $\mathcal{H}$, and $S_{\mathcal{H}}$ the stoichiometric matrix corresponding to $\mathcal{H}$ (i.e., every column of $S_{\mathcal{H}}$ corresponds to a {\it reversible reaction}). The assumption of detailed-balancedness implies that {\it all} equilibria are actually detailed-balanced, and that we may define the {\it conductances} of the $j$-th reversible reaction as
\begin{equation}
\kappa_{j}(x^*) := k_{j}^+ \exp \left(Z_{\mathcal{S}_{j}}^T \Ln (x^*)\right) = k_{j}^- \exp \left(Z_{\mathcal{P}_{j}}^T \Ln (x^*)\right), \, j=1, \cdots,r
\end{equation}
(See \cite{Ederer2007,Schaft2013Lyon} for a discussion regarding the similarities of these constants with conductances in other physical networks.)
It is readily seen that the detailed-balanced assumption is equivalent to the transformed Laplacian matrix $\mathcal{L}(x^*) := L \Xi(x^*)$ for the graph $\mathcal{G}$ being {\it symmetric}, with the $(i,j)$-th = $(j,i)$-th element being equal to the conductance of the reversible reaction between the $i$-th and the $j$-th complex. This means that $\mathcal{L}(x^*)$ can be written as
\[
\mathcal{L}(x^*)=D_{\mathcal{H}}\mathcal{K}^r(x^*)D_{\mathcal{H}}^T
\]
where\footnote{It can be shown \cite{vds} that the matrix $\mathcal{K}^r(x^*)$ is {\it independent} of the choice of the thermodynamic equilibrium $x^*$ up to multiplicative factor for every connected component of $\mathcal{H}$.} $\mathcal{K}^r(x^*)$ is the diagonal matrix of conductances $\kappa_{j}(x^*), j=1, \cdots,r$.
Hence \cite{vds} the dynamics takes the form
\begin{equation*}
\dot{x} = - Z D_{\mathcal{H}}\mathcal{K}^r(x^*)D_{\mathcal{H}}^T \Exp (Z^T \Ln (\frac{x}{x^*})).
\end{equation*}
For $Z=I$ this amounts to {\it symmetric} consensus dynamics on the graph $\mathcal{H}$ {\it without} its orientation.


Finally we will indicate the connections of the Wegscheider conditions mentioned above to the characterization of complex-balancedness as obtained by the Kirchhoff's Matrix Tree theorem. For further details we refer to \cite{vdsJMC}.
Consider a complex-balanced reaction network, with Laplacian matrix $L=-DK$. Compute based on Kirchhoff's Matrix Tree theorem the vector $\rho \in \mathbb{R}^c_+$ satisfying $L\rho=0$, leading to the transformed balanced Laplacian matrix $\mathcal{L}$ given as $\mathcal{L} = L \diag (\rho_1, \cdots, \rho_c)$. In case the reaction network is {\it detailed-balanced} it follows that this transformed Laplacian matrix $\mathcal{L}$ is actually {\it symmetric}; instead of just balanced. 
Symmetry of $\mathcal{L}$ can be seen to be equivalent to the {\it weakened} Wegscheider conditions\footnote{As shown in \cite{vdsJMC} the weakened Wegscheider conditions are also equivalent to the notion of {\it formal balancing} introduced in \cite{dickinson} as a formalization of the 'circuit conditions' of \cite{FeinbergWeg}.} (only depending on the structure of the graph of complexes, and {\it not} on the composition of the complexes) 
\[
\Ln K^{\mathrm{eq}} \in \im D_{\mathcal{H}}^T
\]
\begin{example}
Consider the network described in Ex. \ref{excyclic}. The transformed Laplacian matrix is computed as
\[
\begin{array}{rcl}
\mathcal{L} = \begin{bmatrix} k_1 + k_6 & - k_2 & - k_5 \\
- k_1 & k_2 + k_3 & -k_4 \\
-k_6 & - k_3 & k_4 + k_5
\end{bmatrix}
\begin{bmatrix} k_3k_5 + k_2k_5 + k_2k_4 & 0 & 0 \\
0 & k_1k_5 + k_1k_4 + k_4k_6 & 0 \\
0 & 0 & k_1k_3 + k_3k_6 + k_2k_6 
\end{bmatrix} = \\[6mm]
\begin{bmatrix}
(k_1+k_3)(k_3k_5 + k_2k_5 + k_2k_4) & -k_2(k_1k_5 + k_1k_4 + k_4k_6) & -k_5(k_1k_3 + k_3k_6 + k_2k_6) \\
-k_1(k_3k_5 + k_2k_5 + k_2k_4) & (k_2 + k_3)(k_1k_5 + k_1k_4 + k_4k_6) & -k_4(k_1k_3 + k_3k_6 + k_2k_6) \\
-k_6(k_3k_5 + k_2k_5 + k_2k_4) & -k_3(k_1k_5 + k_1k_4 + k_4k_6) & (k_4 + k_5)(k_1k_3 + k_3k_6 + k_2k_6) 
\end{bmatrix}
\end{array}
\]
This matrix is symmetric if and only if
\[
k_1k_3k_5 = k_2k_4k_6
\]
On the other hand, $\Ln K^{\mathrm{eq}} \in D_{\mathcal{H}}^T$ amounts to
\[
\begin{bmatrix} \ln \frac{k_1}{k_2} \\ \ln \frac{k_3}{k_4} \\ \ln \frac{k_5}{k_6} \end{bmatrix} \in
\im \begin{bmatrix}-1 & 0 & 1 \\ 1 & -1 & 0 \\ 0 & 1 & -1 \end{bmatrix}
\]
which reduces to $\ln \frac{k_1}{k_2} + \ln \frac{k_3}{k_4} + \ln \frac{k_5}{k_6} =0$, and hence to the same condition $k_1k_3k_5 = k_2k_4k_6$.
\end{example}

\newpage

\end{document}